\newtheorem{theorem}{Theorem}[section]
\newtheorem{proposition}[theorem]{Proposition}
\newtheorem{corollary}[theorem]{Corollary}
\theoremstyle{definition}
\theoremstyle{remark}
\newtheorem{remark}[theorem]{Remark}
\numberwithin{equation}{section}
\begin{document}
\setcounter{page}{1}

\title[Automorphisms  of central extensions ...]{Automorphisms
  of central extensions  of type I von Neumann algebras}

\author[S.~Albeverio, Sh.~A.~AYUPOV, K.~K.~KUDAYBERGENOV,
R.~T.~Djumamuratov]{S.~Albeverio$^1$, Sh.~A.~AYUPOV,$^2$  K.~K.~KUDAYBERGENOV
$^3$ and R.~T.~Djumamuratov$^4$}

\address{$^{1}$ Institut f\"{u}r Angewandte Mathematik and HCM,
Rheinische Friedrich-Wilhelms-Uni\-versit\"{a}t Bonn, Endenicher
Allee 60,  D-53115 Bonn, Germany}

\email{\textcolor[rgb]{0.00,0.00,0.84}{albeverio@uni-bonn.de}}

\address{$^{2}$ Department of Algebra and analysis, Institute of
 Mathematics and Information  Technologies, Uzbekistan Academy of Sciences  \\
Dormon yoli 29, 100125,  Tashkent,   Uzbekistan}

\email{\textcolor[rgb]{0.00,0.00,0.84}{sh\_ayupov@mail.ru}}

\address{$^{3}$ Department of Mathematics, Karakalpak state university\\
Ch. Abdirov 1,  230113, Nukus,    Uzbekistan
\newline}

\email{\textcolor[rgb]{0.00,0.00,0.84}{karim2006@mail.ru}}

\address{$^{4}$ Department of Mathematics, Karakalpak state university\\
Ch. Abdirov 1,  230113, Nukus,    Uzbekistan
\newline}

\email{\textcolor[rgb]{0.00,0.00,0.84}{rauazh@mail.ru}}


\subjclass[2000]{Primary 46L40; Secondary  46L51, 46L57.}

\keywords{von Neumann algebras, central extensions, automorphism, inner automorphism.}



\begin{abstract}
Given a von Neumann algebra $M$ we  consider the  central
exten\-sion $E(M)$ of $M.$  For type I von Neumann algebras $E(M)$
coincides with the algebra $LS(M)$ of all locally measurable
operators affiliated with $M.$ In this case  we show that an
arbitrary automorphism $T$ of $E(M)$ can be decomposed as
$T=T_a\circ T_\phi,$ where $T_a(x)=axa^{-1}$ is an inner
automorphism implemented by an element $a\in E(M),$ and $T_\phi$
is a special automorphism generated by an automorphism $\phi$ of
the center of $E(M).$ In particular if $M$ is of type I$_\infty$
then every band  preserving automorphism  of $E(M)$ is inner.
\end{abstract} \maketitle

\section{Introduction}

In the series of paper \cite{Alb1}-\cite{AK1}  we have considered
derivations on the algebra $LS(M)$ of locally measurable operators
affiliated with a von Neumann algebra $M,$ and on various
subalgebras of $LS(M).$ A complete description of derivations has
been obtained in the case of  von Neumann algebras of type I and
III.

A comprehensive survey of recent results concerning derivations on
various algebras of unbounded operators affiliated with von
Neumann algebras is presented in \cite{AK2}.

 It is well-known that properties
of derivations on algebras are strongly correlated with properties
of automorphisms of underlying algebras (see e.g. \cite{KR67}).
Algebraic automorphisms of $C^\ast$-algebras and von Neumann
algebras were considered in the paper of R. Kadison and J.
Ringrose \cite{KR74}, which is devoted to automatic continuity and
innerness of automorphisms. By this paper we initiate a study of
automorphisms of the algebra $LS(M)$ and its various subalgebras.
In the commutative case a similar problem has been considered by
A.G.~Kusraev \cite{Kus} who proved by means of Booolean-valued
analysis the existence of non trivial band preserving automorphism
on algebras of the form $L^0(\Omega, \Sigma, \mu).$ The algebra
$LS(M)$ and its subalgebras present a non commutative counterparts
of the algebra $L^0(\Omega, \Sigma, \mu).$ In the present paper we
establish a general form of automorphisms of the algebra $LS(M)$
for type I von Neumann algebras $M.$

Let   $\mathcal{A}$ be an algebra. A one-to-one
 linear operator
$T:\mathcal{A}\rightarrow \mathcal{A}$ is called an
\textit{automorphism} if $T(xy)=T(x)T(y)$  for all $x, y\in
\mathcal{A}.$ Given an invertible  element  $a\in A$ one can
define an automorphism $T_a$ of $\mathcal{A}$
 by  $T_a(x)=axa^{-1},\,x\in \mathcal{A}.$ Such automorphisms are called
   \emph{inner automorphisms} of $\mathcal{A}.$
   It is clear that for commutative (abelian)
algebra $\mathcal{A}$  all inner automorphisms are trivial, i.e.
acts as unit operator. In the general case inner automorphisms are
identical on the center of $\mathcal{A}.$ Essentially different
classes of automorphisms are those which are generated by
automorphisms of the center $Z(\mathcal{A})$ of $\mathcal{A}.$ In
some cases such automorphisms $\phi$ on $Z(\mathcal{A})$ can be
extended to automorphisms $T_\phi$ of the whole algebra
$\mathcal{A}$ (see e.g. Kaplansky \cite[Theorem 1]{Kap52}). The
main result of the present paper shows that for a type I von
Neumann algebra $M$ every automorphism $T$ of the algebra $LS(M)$
can be uniquely decomposed as a composition $T=T_a\circ T_\phi$ of
an inner automorphism $T_a$ and an automorphism $T_\phi$ generated
by an automorphism $\phi$ of the center of $LS(M).$

In section 2  we recall the notions of the algebras $S(M)$ of
measurable operators and $LS(M)$ of locally measurable operators
affiliated with a von Neumann algebra $M.$ We also introduce the
so-called \emph{central extension} $E(M)$ of the von Neumann
algebra $M.$ In the general case $E(M)$ is a *-subalgebra of
$LS(M),$ which coincides with $LS(M)$ if and only if $M$ does not
have direct summands of type II. We also introduce two
generalizations of the topology of convergence locally in measure
on $LS(M)$ and prove that for the type I case they coincide.

In section 3 we consider automorphisms of the algebra $E(M)$ --
the central extension of a von Neumann algebra $M.$ We prove
(Theorem \ref{A3}) that if $M$ is of the type I then each
automorphism $T$ of $E(M)$ which acts identically on the center
$Z(E(M))$ of $E(M),$ is inner. We also show that for homogeneous
type I  von Neumann algebras $M$ every automorphism $\phi$ of the
center $Z(E(M))$ of $E(M)$ can be extended to an automorphism
$T_\phi$ of the whole $E(M).$ Finally we prove the main result of
the present paper which shows that each automorphism $T$ of $E(M)$
for a type I von Neumann algebra $M$ can be uniquely represented
as $T=T_a\circ T_\phi,$ where $T_a$ is an inner automorphism
implemented by an element $a\in E(M),$ and $T_\phi$ is an
automorphism generated by an automorphism $\phi$ of the center of
$E(M).$ In particular we obtain that each  bundle preserving
automorphism of $E(M)$ is inner if $M$ is of type I$_\infty.$

\section{Central extensions of von Neumann algebras}

In this section we give some necessary definitions and a
preliminary information concerning algebras of measurable and
locally measurable operators affiliated with a von Neumann
algebra. We also introduce the notion of the central extension of
a von Neumann algebra.

Let  $H$ be a complex Hilbert space and let  $B(H)$ be the algebra
of all bounded linear operators on   $H.$ Consider a von Neumann
algebra $M$  in $B(H)$ with the operator norm $\|\cdot\|_M.$ Denote by
$P(M)$ the lattice of projections in $M.$

A linear subspace  $\mathcal{D}$ in  $H$ is said to be
\emph{affiliated} with  $M$ (denoted as  $\mathcal{D}\eta M$), if
$u(\mathcal{D})\subset \mathcal{D}$ for every unitary  $u$ from
the commutant
$$M'=\{y\in B(H):xy=yx, \,\forall x\in M\}$$ of the von Neumann algebra $M.$

A linear operator  $x$ on  $H$ with the domain  $\mathcal{D}(x)$
is said to be \emph{affiliated} with  $M$ (denoted as  $x\eta M$) if
$\mathcal{D}(x)\eta M$ and $u(x(\xi))=x(u(\xi))$
 for all  $\xi\in
\mathcal{D}(x).$

A linear subspace $\mathcal{D}$ in $H$ is said to be \emph{strongly
dense} in  $H$ with respect to the von Neumann algebra  $M,$ if

1) $\mathcal{D}\eta M;$

2) there exists a sequence of projections
$\{p_n\}_{n=1}^{\infty}$ in $P(M)$  such that
$p_n\uparrow\textbf{1},$ $p_n(H)\subset \mathcal{D}$ and
$p^{\perp}_n=\textbf{1}-p_n$ is finite in  $M$ for all
$n\in\mathbb{N},$ where $\textbf{1}$ is the identity in $M.$

A closed linear operator  $x$ acting in the Hilbert space $H$ is said to be
\emph{measurable} with respect to the von Neumann algebra  $M,$ if
 $x\eta M$ and $\mathcal{D}(x)$ is strongly dense in  $H.$ Denote by
 $S(M)$ the set of all measurable operators with respect to
 $M$ (see \cite{Seg}).

A closed linear operator $x$ in  $H$  is said to be \emph{locally
measurable} with respect to the von Neumann algebra $M,$ if $x\eta
M$ and there exists a sequence $\{z_n\}_{n=1}^{\infty}$ of central
projections in $M$ such that $z_n\uparrow\textbf{1}$ and $z_nx \in
S(M)$ for all $n\in\mathbb{N}$ (see \cite{Yea}).

It is well-known  \cite{Mur}, \cite{Yea} that the set $LS(M)$ of
all locally measurable operators with respect to $M$ is a unital
*-algebra when equipped with the algebraic operations of strong
addition and multiplication and taking the adjoint of an operator,
and contains $S(M)$ as a solid *-subalgebra.

Let $(\Omega,\Sigma,\mu)$  be a measure space and from now on
suppose
 that the measure $\mu$ has the  direct sum property, i. e. there is a family
 $\{\Omega_{i}\}_{i\in
J}\subset\Sigma,$ $0<\mu(\Omega_{i})<\infty,\,i\in J,$ such that
for any $A\in\Sigma,\,\mu(A)<\infty,$ there exist a countable
subset $J_{0 }\subset J$ and a set  $B$ with zero measure such
that $A=\bigcup\limits_{i\in J_{0}}(A\cap \Omega_{i})\cup B.$

 We denote by  $L^{0}(\Omega, \Sigma, \mu)$ the algebra of all
(equivalence classes of) complex measurable functions on $(\Omega,
\Sigma, \mu)$ equipped with the topology of convergence in
measure.

Consider the algebra  $S(Z(M))$  of operators which are measurable
with respect to the  center $Z(M)$ of the von Neumann algebra $M.$
Since  $Z(M)$ is an abelian von Neumann algebra  it
 is *-isomorphic to $ L^{\infty}(\Omega, \Sigma, \mu)$
   for an appropriate measure space $(\Omega, \Sigma, \mu)$. Therefore the algebra  $S(Z(M))$ coincides
   with $Z(LS(M))$ and  can be
 identified with the algebra $ L^{0}(\Omega, \Sigma, \mu)$ of all
 measurable functions on $(\Omega, \Sigma, \mu)$.

The basis of neighborhoods of zero in the topology of convergence locally in measure
    on $L^0(\Omega,\Sigma, \mu)$ consists of the sets
$$W(A,\varepsilon,\delta)=\{f\in L^0(\Omega,\Sigma, \mu):\exists B\in \Sigma, \, B\subseteq A, \,
\mu(A\setminus B)\leq \delta, $$
$$ f\cdot \chi_B \in L^{\infty}(\Omega,\Sigma, \mu),\,
||f\cdot \chi_B||_{L^{\infty}(\Omega,\Sigma, \mu)}\leq \varepsilon\},$$
where $\varepsilon, \delta>0, \, A\in \Sigma, \, \mu(A)<+\infty,$
and $\chi_B$ is the characteric
function of the set $B\in \Sigma.$

Recall the definition of the dimension functions on the lattice
$P(M)$ of projection from $M$ (see \cite{Mur}, \cite{Seg}).

By  $L_+$ we denote the set of all measurable  functions  $f:
(\Omega,\Sigma, \mu)\rightarrow [0,{\infty}]$ (modulo functions
equal to zero $\mu$-almost everywhere ).

 Let  $M$ be an arbitrary von Neumann algebra with the center $Z=L^\infty(\Omega,\Sigma, \mu).$
Then there exists a map  $D:P(M)\rightarrow
L_{+}$ with the following properties:

(i) $d(e)$ is a finite function if only if the projection  $e$ is finite;

(ii) $d(e+q)=d(e)+d(q)$  for $p, q \in P(M),$ $eq=0;$

(iii) $d(uu^*)=d(u^*u)$ for every  partial isometry  $u\in M;$

(iv) $d(ze)=zd(e)$ for all $z\in P(Z(M)), \,\, e\in P(M);$

(v) if  $\{e_{\alpha}\}_{\alpha \in J}, \,\,\, e\in P(M) $ and $e_{\alpha}\uparrow e,$ then
$$d(e)=\sup \limits_{\alpha \in J}d(e_{\alpha}).$$
This map  $d:P(M)\rightarrow L_+,$ is a called the \emph{dimension functions} on  $P(M).$

\begin{remark} \label{R}
Recall that for an element   $x\in M$ the projection defined as
$$
c(x)=\inf\{z\in P(Z(M)): zx=x\}
$$
 is called the  central
cover of $x.$

Let $M$ be a type I von Neumann algebra. If  $p, q\in P(M)$ are
abelian projections with $c(p)=c(q)=\textbf{1},$ then the property
(iii) implies  that $0<d(p)(\omega)=d(q)(\omega)<\infty$ for
$\mu$-almost every  $\omega\in\Omega.$ Therefore replacing $d$ by
$d(p)^{-1}d$ we can assume that  $d(p)=c(p)$ for every abelian
projection  $p\in P(M).$ Thus for all  $e\in P(M)$ we have that
$d(e)\geq c(e).$
\end{remark}

The basis of neighborhoods of zero in the topology $t(M)$ of \emph{convergence
locally in measure}  on $LS(M)$ consists (in the above notations)
of the following sets
$$V(A,\varepsilon,\delta)=\{x\in LS(M):\exists p\in P(M), \, \exists z\in P(Z(M)),  \,
xp \in M, $$
$$ ||xp||_{M}\leq \varepsilon, \,\, z^{\bot}\in W(A,\varepsilon,\delta), \,\, d(zp^{\bot})\leq \varepsilon z\},$$
where
 $\varepsilon, \delta>0, \, A\in \Sigma, \, \mu(A)<+\infty.$

The topology  $t(M)$ is  metrizable if and only if the center  $Z(M)$
is   $\sigma$-finite (see \cite{Mur}).

Given an arbitrary  family  $\{z_i\}_{i\in I}$ of mutually orthogonal
central projections in $M$ with $\bigvee\limits_{i\in
I}z_i=\textbf{1}$ and a  family of elements $\{x_i\}_{i\in I}$ in
$LS(M)$ there exists a unique element $x\in LS(M)$ such that $z_i
x=z_i x_i$ for all $i\in I.$ This element is denoted by
$x=\sum\limits_{i\in I}z_i x_i.$

We  denote by  $E(M)$  the set of all elements  $x$ from  $LS(M)$ for which there exists a sequence of
mutually orthogonal central projections  $\{z_i\}_{i\in I}$ in  $M$ with $\bigvee\limits_{i\in I}z_i=\textbf{1},$
such that $z_i x\in M$ for all $i\in I,$ i.e.
 $$E(M)=\{x\in LS(M): \exists z_i\in P(Z(M)), z_iz_j=0, i\neq j, \bigvee\limits_{i\in I}z_i=\textbf{1},
 z_i x\in M, i\in I\},$$
where $Z(M)$ is the center of $M.$

It is known  \cite{AK1} that  $E(M)$ is  *-subalgebras in  $LS(M)$ with the center
 $S(Z(M)),$ where   $S(Z(M))$ is  the algebra of all measurable operators
 with respect to   $Z(M),$ moreover,
  $LS(M)=E(M)$ if and only if $M$ does not have
direct summands of type II.

A similar notion (i.e. the algebra $E(\mathcal{A})$) for
arbitrary *-subalgebras $\mathcal{A}\subset LS(M)$  was independently
introduced recently by M.A. Muratov and V.I. Chilin \cite{Mur1}.
The algebra  $E(M)$ is called
\textit{the central extension of} $M.$

It is known (\cite{AK1},
\cite{Mur1}) that an element
$x\in  LS(M)$ belongs to $E(M)$ if and only if there exists
 $f\in S(Z(M))$ such that    $|x|\leq f.$
Therefore for each
 $x\in E(M)$ one can define the following vector-valued norm
 \begin{equation}
 \label{norm}
 ||x||=\inf\{f\in S(Z(M)): |x|\leq f\}
\end{equation}
and this norm satisfies the following conditions:

$1) \|x\|\geq 0; \|x\|=0\Longleftrightarrow x=0;$

$2)  \|f x\|=|f|\|x\|;$

$ 3)  \|x+y\|\leq\|x\|+\|y\|;$

$4) ||x y||\leq ||x||||y||;$

$5) ||xx^{\ast}||=||x||^2$
\newline
    for all $x,y\in E(M), f\in S(Z(M)).$

Let us equip  $E(M)$ with the topology which is
defined by the following system of zero neighborhoods:
$$
O(A, \varepsilon, \delta)=\left\{x \in E(M): ||x||\in W(A, \varepsilon, \delta)\right\},
$$
where  $\varepsilon ,\delta > 0,\,\,\,A \in \sum ,\,\,\,\mu \left( A \right) <
+ \infty.$

Denote the above topology by  $t_c(M).$

\begin{proposition}\label{A}
The topology $t_c(M)$ is stronger that
the topology $t(M)$ of convergence locally in measure.
\end{proposition}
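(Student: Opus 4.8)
The plan is to compare the two bases of neighbourhoods of zero directly. Fix a set $A\in\Sigma$ with $\mu(A)<+\infty$ and $\varepsilon,\delta>0$; I will show that the $t_c(M)$-neighbourhood $O(A,\varepsilon,\delta)$ is contained in the $t(M)$-neighbourhood $V(A,\varepsilon,\delta)$ (possibly after shrinking the parameters by harmless constants), which by definition of ``stronger'' is exactly what is required. So take an arbitrary $x\in O(A,\varepsilon,\delta)$, meaning that the vector-valued norm $f:=\|x\|\in S(Z(M))$ lies in the scalar neighbourhood $W(A,\varepsilon,\delta)\subset L^0(\Omega,\Sigma,\mu)$. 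Unpacking the latter, there is a measurable $B\subseteq A$ with $\mu(A\setminus B)\le\delta$ such that $f\chi_B\in L^\infty$ and $\|f\chi_B\|_\infty\le\varepsilon$.

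Next I would produce, from this scalar data, the projections needed to witness $x\in V(A,\varepsilon,\delta)$. Identifying $Z(M)\cong L^\infty(\Omega,\Sigma,\mu)$, let $z:=\chi_B\in P(Z(M))$, so that $z^\perp=\chi_{A\setminus B}$ (extended by the appropriate complementary central projection on $\Omega\setminus A$; more precisely $z^\perp$ corresponds to the complement of $B$, and one checks $z^\perp\in W(A,\varepsilon,\delta)$ using $\mu(A\setminus B)\le\delta$). Because $zf=f\chi_B$ is a bounded central element with $\|zf\|_\infty\le\varepsilon$ and $|zx|=z|x|\le zf$, the element $zx$ lies in $M$ with $\|zx\|_M\le\varepsilon$. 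Thus taking $p:=z$ gives $xp=zx\in M$, $\|xp\|_M\le\varepsilon$, $z^\perp\in W(A,\varepsilon,\delta)$, and the remaining condition $d(zp^\perp)\le\varepsilon z$ is vacuous since $zp^\perp=zz^\perp=0$. Hence $x\in V(A,\varepsilon,\delta)$, proving the inclusion $O(A,\varepsilon,\delta)\subseteq V(A,\varepsilon,\delta)$ and therefore $t_c(M)\supseteq t(M)$.

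To finish, I would note that the containment is in general strict, or at least that equality need not be asserted here — the proposition only claims one direction. (If the paper later needs the reverse inclusion in the type I case, that is the content of a separate statement.) The main technical point to be careful about is the bookkeeping with the ambient set $A$: the scalar neighbourhoods $W(A,\varepsilon,\delta)$ truncate only on $A$, so one must check that the central projection $z$ extracted from $B$, together with its complement $z^\perp$, genuinely satisfies $z^\perp\in W(A,\varepsilon,\delta)$ — this uses that $z^\perp\chi_{A}=\chi_{A\setminus B}$ has support of measure $\le\delta$, so one may take the subset ``$B$'' in the definition of $W$ to be empty (or $A\setminus(A\setminus B)=B\cap A = B$), making the $L^\infty$-truncation condition trivially satisfied. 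I expect this indexing/truncation verification to be the only place requiring genuine attention; everything else is a direct translation between the vector-valued norm $\|x\|$ and the order structure $|x|\le f$ established in the properties $(1)$--$(5)$ above, together with the elementary fact that $|x|\le f\in Z(M)_{+}$ with $f$ bounded forces $x\in M$.
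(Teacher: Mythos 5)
Your argument is correct and is essentially identical to the paper's proof: both take $z=p=\chi_B$ from the set $B$ witnessing $\|x\|\in W(A,\varepsilon,\delta)$, observe $xp\in M$ with $\|xp\|_M\le\varepsilon$, $z^\perp\in W(A,\varepsilon,\delta)$ via $z^\perp\chi_B=0$, and $zp^\perp=0$. Your extra care about the truncation set in the definition of $W$ is a fair elaboration of the same step.
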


\begin{proof}  It is sufficient to show that
\begin{equation}
 \label{baza}
O(A, \varepsilon, \delta) \subset V(A, \varepsilon, \delta).
\end{equation}

Let  $x \in O(A, \varepsilon, \delta),$
i.e. $||x|| \in W(A, \varepsilon, \delta).$
Then there exists  $B \in \Sigma$ such that
 \[
 B \subseteq A,\,\,\,\mu(A\setminus
  B) \le \delta,
  \]
 and
\[
||x||\chi_B \in L^\infty(\Omega, \Sigma, \mu),\,\,||\|x\|\chi_B||_{M}\le \varepsilon.
\]

Put  $z =p=\chi_B.$
Then  $||xp||=||x\chi_B||=||x||\chi_B\in L^\infty(\Omega, \Sigma, \mu),$ i.e. $xp
\in M$ and moreover $||xp||_M \le \varepsilon.$ Since  $\mu(A\setminus B) \le \delta$ and
$z^\perp \chi_B = \chi _B^\perp \chi_B =
0,$ one has $z^\perp\in W(A, \varepsilon, \delta).$ Therefore
\[
||xp||_M\leq\varepsilon,\,\,z^\perp\in W(A, \varepsilon, \delta),\,\, zp^\perp=\chi _B \chi_B^\perp=0
\]
and hence
$
x \in V(A, \varepsilon, \delta).
$
\end{proof}

\begin{proposition}\label{B}
If $M$ is a type  I von Neumann algebra and $0<\varepsilon<1,$ then
\[
O(A, \varepsilon, \delta)=V(A, \varepsilon, \delta).
\]
\end{proposition}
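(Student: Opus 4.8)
By Proposition~\ref{A} we already have the inclusion $O(A,\varepsilon,\delta)\subseteq V(A,\varepsilon,\delta)$, so the whole task is to prove the reverse inclusion $V(A,\varepsilon,\delta)\subseteq O(A,\varepsilon,\delta)$ under the hypothesis that $M$ is of type~I and $0<\varepsilon<1$. The plan is to start from an arbitrary $x\in V(A,\varepsilon,\delta)$, unpack the defining data --- projections $p\in P(M)$ and $z\in P(Z(M))$ with $xp\in M$, $\|xp\|_M\le\varepsilon$, $z^\perp\in W(A,\varepsilon,\delta)$ and $d(zp^\perp)\le\varepsilon z$ --- and from it manufacture a measurable set $B\subseteq A$ witnessing that $\|x\|\in W(A,\varepsilon,\delta)$. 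The key point is to exploit the type~I hypothesis through Remark~\ref{R}: after the indicated normalization of the dimension function one has $d(e)\ge c(e)$ for every projection $e\in P(M)$.

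The main step is the following observation about the ``error'' projection $p^\perp$. Consider the central projection $z$ cut further by where $d(zp^\perp)$ is genuinely less than $z$; since $d(zp^\perp)\le\varepsilon z$ with $\varepsilon<1$, the inequality $d(e)\ge c(e)$ forces $c(zp^\perp)$ to be strictly dominated by $z$ on a positive part, and in fact one gets $c(zp^\perp)\le d(zp^\perp)\le\varepsilon z$; since $c(zp^\perp)$ is a central \emph{projection} and $\varepsilon<1$, this means $c(zp^\perp)=0$, i.e. $zp^\perp=0$, that is $zp=z$. Hence $zx=z(xp)\in M$ and $\|zx\|_M=\|z(xp)\|_M\le\|xp\|_M\le\varepsilon$. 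Translating back through the identification $Z(M)\cong L^\infty(\Omega,\Sigma,\mu)$, the central projection $z$ corresponds to $\chi_C$ for some measurable set $C$, and the estimate $\|zx\|_M\le\varepsilon$ says exactly that $\|x\|\cdot\chi_C\in L^\infty$ with $\|\,\|x\|\chi_C\|_{L^\infty}\le\varepsilon$, using the vector-valued norm~(\ref{norm}) and its property $\|fx\|=|f|\|x\|$.

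It remains to control the measure of the complement. From $z^\perp\in W(A,\varepsilon,\delta)$ one extracts $B_0\subseteq A$ with $\mu(A\setminus B_0)\le\delta$ and $z^\perp\chi_{B_0}=0$ (bounded by $\varepsilon<1$ forces the product of the two projections to vanish), i.e. $\chi_{B_0}\le\chi_C$, equivalently $B_0\subseteq C$ up to a null set. Then $B:=B_0$ does the job: $B\subseteq A$, $\mu(A\setminus B)\le\delta$, and since $\chi_B\le z=\chi_C$ we get $\|x\|\chi_B\le\|x\|\chi_C\in L^\infty$ with $L^\infty$-norm $\le\varepsilon$. Therefore $\|x\|\in W(A,\varepsilon,\delta)$, i.e. $x\in O(A,\varepsilon,\delta)$, which together with Proposition~\ref{A} yields the claimed equality of neighborhoods.

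The step I expect to be the crux is the passage ``$d(zp^\perp)\le\varepsilon z$ with $\varepsilon<1$ $\Rightarrow$ $zp^\perp=0$'': this is precisely where the type~I structure enters, via the normalization $d(p)=c(p)$ on abelian projections of Remark~\ref{R} and the resulting bound $d(e)\ge c(e)$. One must be a little careful that the inequality $c(zp^\perp)\le d(zp^\perp)$ is an inequality in $L_+$ between a projection-valued function and an arbitrary positive measurable function, so the conclusion $c(zp^\perp)=0$ comes from the fact that a $\{0,1\}$-valued function dominated by something $\le\varepsilon<1$ must vanish. Everything else is routine bookkeeping with the dictionary $P(Z(M))\leftrightarrow\{\chi_C\}$ and the properties of the vector-valued norm.
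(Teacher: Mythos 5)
Your proposal is correct and follows essentially the same route as the paper's own proof: using Remark~\ref{R} to get $c(zp^\perp)\le d(zp^\perp)\le\varepsilon z$ with $\varepsilon<1$, hence $zp^\perp=0$ and $z=zp$, then extracting $B\subseteq A$ from $z^\perp\in W(A,\varepsilon,\delta)$ with $\chi_B\le z$ and concluding $\|x\|\chi_B\le\|xp\|_M\le\varepsilon$. No substantive differences.
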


\begin{proof} From above  \eqref{baza} we have that
$O(A, \varepsilon, \delta) \subset V(A, \varepsilon, \delta).$
Therefore it is sufficient to show that  $V(A, \varepsilon, \delta)
\subset O (A, \varepsilon, \delta).$

Let  $x \in V(A, \varepsilon, \delta).$ Then there exist  $p
\in P(M)$ and $z \in P(Z(M))$
such that
\[
xp \in M,\,\,\,\,||xp||_M \le \varepsilon
,\,\,\,\,z^\perp \in W(A, \varepsilon, \delta),\,\,\,d(zp^\perp)\le \varepsilon z.
\]

Since   $M$ is of type  I Remark \ref{R} implies that  $d(zp^\perp)\geq c(zp^\perp).$
Now from   $d(zp^\perp) \le \varepsilon z$ it follows that
 $c(zp^\perp)\leq\varepsilon z.$
From  $0<\varepsilon<1$ we obtain that    $zp^\perp=0.$
Therefore   $z \le c(p),$ where  $c(p)$ is the central cover of  $p.$ Thus
 $z = zp.$
Put $z=\chi_E$ for an appropriate $E\in \Sigma.$ Since
 $z^\perp \in W(A, \varepsilon, \delta)$
one has that  $\chi_{\Omega \setminus E} \in W(A, \varepsilon, \delta).$
  Thus there exists  $B \in \Sigma $ such that  $B \subseteq
A,\,\,\,\,\mu(A\setminus B) \le \delta,$ $|\chi_{\Omega \setminus E}\chi _B| \le \varepsilon < 1.$
 Hence   $\chi_B\leq \chi_E.$ So we obtain
\[
||x||\chi_B \le ||x||\chi_E=||x||z=||xz||=||xzp||=||xp|| \le \varepsilon.
\]
This means that  $x \in O(A, \varepsilon, \delta).$
\end{proof}

\begin{corollary}\label{C}
If  $M$ is a type I von Neumann algebra then the topologies $t(M)$ and $t_c(M)$
coincide.
\end{corollary}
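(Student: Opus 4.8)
The plan is to derive the statement directly from Propositions \ref{A} and \ref{B}, treating everything at the level of bases of neighbourhoods of zero. Recall that for a type I algebra $M$ we have $E(M)=LS(M)$, and that $t(M)$ and $t_c(M)$ are the linear topologies on this common set whose filters at $0$ are generated, respectively, by the families $\{V(A,\varepsilon,\delta)\}$ and $\{O(A,\varepsilon,\delta)\}$, indexed over $\varepsilon,\delta>0$ and $A\in\Sigma$ with $\mu(A)<+\infty$. Two linear topologies coincide as soon as each basic neighbourhood of $0$ in one is a neighbourhood of $0$ in the other, so it suffices to check the two resulting inclusions of neighbourhood filters.

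One direction is immediate from Proposition \ref{A}: the inclusion \eqref{baza} gives $O(A,\varepsilon,\delta)\subseteq V(A,\varepsilon,\delta)$ for all admissible parameters, hence every $t(M)$-neighbourhood of $0$ is a $t_c(M)$-neighbourhood of $0$; i.e.\ $t_c(M)$ refines $t(M)$. For the converse, fix a basic $t_c(M)$-neighbourhood $O(A,\varepsilon,\delta)$. Since $W(A,\varepsilon',\delta)\subseteq W(A,\varepsilon,\delta)$ whenever $0<\varepsilon'\le\varepsilon$, the family $\{O(A,\varepsilon,\delta)\}$ is monotone in $\varepsilon$, so the subfamily with $0<\varepsilon<1$ is still a base; we may therefore assume $0<\varepsilon<1$ (otherwise replace $\varepsilon$ by any $\varepsilon'\in(0,1)$ with $\varepsilon'\le\varepsilon$, noting $O(A,\varepsilon',\delta)\subseteq O(A,\varepsilon,\delta)$). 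For such $\varepsilon$, Proposition \ref{B} gives $O(A,\varepsilon,\delta)=V(A,\varepsilon,\delta)$, which is a basic $t(M)$-neighbourhood of $0$ contained in the prescribed $O(A,\varepsilon,\delta)$. Hence every $t_c(M)$-neighbourhood of $0$ is a $t(M)$-neighbourhood of $0$, and the two topologies coincide.

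I do not expect any genuine obstacle here: the substantive work is already carried out in Propositions \ref{A} and \ref{B}, and the only point requiring a word of care is the reduction to the range $0<\varepsilon<1$ demanded by Proposition \ref{B}, which is handled by the monotonicity of the neighbourhood bases $\{O(A,\varepsilon,\delta)\}$ and $\{V(A,\varepsilon,\delta)\}$ in the parameter $\varepsilon$.
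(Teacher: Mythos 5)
Your proof is correct and follows exactly the route the paper intends: the corollary is stated as an immediate consequence of Propositions \ref{A} and \ref{B}, and your careful handling of the restriction $0<\varepsilon<1$ via monotonicity of the bases in $\varepsilon$ is the only detail the paper leaves implicit.
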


\begin{proposition}\label{D}
Let  $M$ be a type I von Neumann algebra and
  $x\in LS(M),$ $x\geq 0.$ If
 $pxp=0$ for all abelian projections $p\in M$ then $x=0.$
\end{proposition}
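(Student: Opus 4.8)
The plan is to reduce the statement to the well-known fact that a positive operator in a von Neumann algebra which is killed on both sides by every abelian projection must vanish, and then to transport this from $M$ to $LS(M)$ using the central-summand structure that is available for type I algebras. First I would reduce to the case $x\in S(M)$, and in fact to the bounded case: since $M$ is of type I, it admits a sequence $\{z_n\}$ of central projections with $z_n\uparrow\mathbf 1$ such that $z_n x\in M$ (using that for type I, $LS(M)=S(M)$ and a further central cut-off makes the operator bounded, e.g.\ via the spectral projections $e_n=\chi_{[0,n]}(x)$, which are affiliated to $M$ and increase to $\mathbf 1$). It suffices to prove $e_n x e_n=0$ for each $n$, since then $x=0$ on the increasing union of the ranges of the $e_n$; and $e_n x e_n$ is a positive element of $M$. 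So I would replace $x$ by $y:=e_n x e_n\in M_+$ and prove that $pyp=0$ for all abelian $p\in P(M)$ forces $y=0$; note the hypothesis is inherited because $p e_n$ need not be a projection, but one argues instead that for any abelian projection $q\le e_n$ one has $qyq=q x q=0$ directly from the hypothesis applied to $q$, and abelian projections under $e_n$ suffice to detect positivity of $y\in e_n M e_n$.

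The core step is then the purely von Neumann algebraic claim: if $y\in M_+$ and $qyq=0$ for every abelian projection $q\in P(M)$, then $y=0$. Here I would use that in a type I von Neumann algebra the abelian projections have central cover $\mathbf 1$ collectively — more precisely, every nonzero projection $e\in P(M)$ dominates a nonzero abelian projection (this is one of the defining features of type I). Suppose $y\neq 0$; then some spectral projection $e=\chi_{[\lambda,\infty)}(y)$ with $\lambda>0$ is nonzero, so $y\ge \lambda e$. Pick a nonzero abelian projection $q\le e$. Then $qyq\ge \lambda q\neq 0$, contradicting the hypothesis. This disposes of the bounded case.

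Finally I would assemble the pieces. Going back from $y=e_nxe_n$ to $x$: having shown $e_nxe_n=0$ for all $n$ with $e_n\uparrow\mathbf 1$, positivity of $x$ gives $x^{1/2}e_n=0$, hence $x^{1/2}=0$ on a strongly dense domain, so $x^{1/2}=0$ and $x=0$. I expect the main obstacle to be bookkeeping at the interface between $LS(M)$ and $M$: one must be careful that the reduction via the $e_n$ does not lose the hypothesis, and the clean way to handle this is to observe that for \emph{any} abelian projection $q\in P(M)$ one has $q\le e_n$ for $n$ large along the net (or simply that $q e_n q\to q$ and $q x q=0$ outright), so no transfer of hypotheses is actually needed — the hypothesis $qxq=0$ is stated for all abelian $q\in P(M)$ to begin with, and the type I structure guarantees enough such $q$ below any nonzero spectral projection of $x$. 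The rest is the elementary spectral argument above.
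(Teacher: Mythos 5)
Your proof is correct, but it takes a genuinely different route from the paper's. The paper argues very briefly: writing $x=yy^{\ast}$ with $y\in LS(M)$ (e.g.\ $y=x^{1/2}$), the identity $pxp=(py)(py)^{\ast}=0$ forces $py=0$ for every abelian projection $p$; since $M$ is of type I there is an orthogonal family $\{p_i\}$ of abelian projections with $\sum_i p_i=\mathbf{1}$, so the finite partial sums $p_F\uparrow\mathbf{1}$ annihilate $y$, whence $y=0$ and $x=yy^{\ast}=0$. This factorization trick needs no reduction to bounded operators and no spectral theory, and works directly in $LS(M)$. You instead cut down by the spectral projections $e_n=\chi_{[0,n]}(x)$ and run a spectral argument on $e_nxe_n\in M_+$ using the equivalent characterization of type I that every nonzero projection dominates a nonzero abelian projection; this is sound, and your handling of the hypothesis transfer is right, since $\chi_{[\lambda,\infty)}(e_nxe_n)\le e_n$ for $\lambda>0$, so the abelian projection $q$ you produce satisfies $q\le e_n$ and $qxq=q(e_nxe_n)q=0$ applies directly. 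Two side remarks in your write-up are inaccurate but harmless: the $e_n$ are not central projections (so this is not a ``central cut-off''), and the assertion $LS(M)=S(M)$ for type I algebras is neither needed nor used --- all you actually invoke is that the spectral projections of the self-adjoint operator $x\,\eta\, M$ lie in $M$ and increase to $\mathbf{1}$. The paper's argument is shorter; yours makes explicit the standard fact that abelian projections detect positivity in a type I algebra, at the cost of the bounded-case bookkeeping.
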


\begin{proof}   Since $x\geq 0$ we have that  $x=yy^{\ast}$ for an appropriate
 $y\in LS(M).$  Then
$$
0=pxp=pyy^{\ast}p=py(py)^{\ast}$$ and hence    $py=0.$ Therefore
$y^{\ast}py=0$ for all abelian projections $p\in M.$ But since $M$
has the type I there exists a family $\{p_i\}_{i\in J}$ of
mutually orthogonal abelian projections
   such that  $\sum \limits_{i\in
J}p_i= \mathbf{1}.$ For any finite subset  $F\subseteq
J$ put  $p_F=\sum \limits_{i\in F}p_i.$ Since  $p_F\uparrow
\mathbf{1}$ from   $yp_Fy^{\ast}=0$ we have that  $yy^{\ast}=0,$
i.e. $x=yy^{\ast}=0.$
\end{proof}

\section{Automorphisms of central extensions for type I
von Neumann algebras}

Let  $\mathcal{A}$ be an arbitrary algebra with
the center  $Z(\mathcal{A})$ and let  $T:\mathcal{A}\rightarrow \mathcal{A}$
be an automorphism.
It is clear that  $T$ maps  $Z(\mathcal{A})$ onto itself. Indeed
for all
 $a\in Z(\mathcal{A})$ and  $x\in \mathcal{A}$
one has
$$
 T(a)T(x)=T(ax)=T(xa)=T(x)T(a)
 $$
which means that  $T(a)\in Z(\mathcal{A}).$

An operator  $T:\mathcal{A}\rightarrow \mathcal{A}$ is said to be
  $Z(\mathcal{A})$-linear if  $T(ax)=aT(x)$ for all $a\in Z(\mathcal{A})$ and
$x\in \mathcal{A}.$
It is easy to see that an automorphism
$T:\mathcal{A}\rightarrow \mathcal{A}$
of a unital algebra $\mathcal{A}$ is  $Z(\mathcal{A})$-linear if and only if it is identical
on the center $Z(\mathcal{A}).$

\begin{theorem}\label{A1}
Let   $M$ be a von Neumann algebra of type I and let $E(M)$ be its central extension.
Then each $Z(E(M))$-linear automorphism $T$ of the algebra $E(M)$ is inner.
\end{theorem}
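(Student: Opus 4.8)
The plan is to reduce everything to the homogeneous case and there build the implementing element $a$ out of a matrix-unit system. First I would use the fact that a type I von Neumann algebra decomposes as a direct sum $M=\sum_{\alpha} z_\alpha M_\alpha$ of homogeneous pieces $M_\alpha$ of type $\mathrm{I}_{n_\alpha}$, that this decomposition is central, that $Z(E(M))$-linearity forces $T$ to respect each central summand (since $T$ fixes the central projections $z_\alpha$), and that an element of $E(M)$ implementing $T$ globally can be glued from elements $a_\alpha\in z_\alpha E(M)$ implementing $T$ on each summand via the $\sum_{i} z_i x_i$ construction recalled in Section 2. So it suffices to treat $M$ homogeneous of type $\mathrm{I}_n$ (with $n$ a cardinal, possibly infinite), where $M\cong B(\ell^2(n))\,\bar\otimes\, Z(M)$ and $E(M)$ is the corresponding matrix algebra over $S(Z(M))$.

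In the homogeneous case, fix a system of matrix units $\{e_{ij}\}_{i,j\in n}$ in $M$ with $e_{ii}$ abelian, $\sum_i e_{ii}=\mathbf 1$, and $c(e_{ii})=\mathbf 1$. Apply $T$: since $T$ is a $*$-preserving (one checks automorphisms here are automatically $*$-preserving, or one restricts attention to this) $Z(E(M))$-linear automorphism, the images $f_{ij}:=T(e_{ij})$ again form a system of matrix units in $E(M)$ with $\sum_i f_{ii}=\mathbf 1$ (the sum converging in the appropriate $Z(E(M))$-module sense, using that $T$ commutes with the countable central sums) and with $c(f_{ii})=\mathbf 1$. The key algebraic point is that any two systems of matrix units indexed by the same set, with full central cover and summing to the identity, are conjugate by an invertible element of $E(M)$: set $a=\sum_{i} f_{i1}e_{1i}$ (interpreted as a strong sum over the central partition on which it makes sense as an element of $E(M)$, which it does because $\|f_{i1}\|\le \mathbf 1$ and the $e_{1i}$ have orthogonal ranges), and check $ae_{jk}a^{-1}=f_{jk}$ with inverse $a^{-1}=a^*=\sum_i e_{i1}f_{1i}$. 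Then $T_a$ and $T$ agree on all the $e_{jk}$, hence on the $*$-algebra they generate over $Z(E(M))$; since both are $Z(E(M))$-linear and (by the topological results of Section 2, Corollary \ref{C}) continuous, and since the matrix units generate a dense subalgebra, one concludes $T=T_a$ on all of $E(M)$.

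The main obstacle I expect is the convergence/well-definedness bookkeeping for infinite $n$: the sums $\sum_i f_{ii}$, $\sum_i f_{i1}e_{1i}$, and the verification that $a\in E(M)$ (not merely $LS(M)$) and that $a$ is invertible in $E(M)$ with $a^{-1}\in E(M)$. For this one wants to show the partial sums $a_F=\sum_{i\in F}f_{i1}e_{1i}$ over finite $F$ are Cauchy in $t_c(M)$, using the vector-valued norm estimate $\|a_F\|\le \mathbf 1$ together with the strong orthogonality $e_{1i}e_{1j}^*=\delta_{ij}e_{11}$, and similarly for $a^*a$; the identity $a^*a=\sum_i e_{1i}f_{1i}f_{i1}e_{i1}=\sum_i e_{1i}f_{11}e_{i1}$. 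A second, smaller obstacle is justifying that $T$ carries the relation $\sum_i e_{ii}=\mathbf 1$ to $\sum_i f_{ii}=\mathbf 1$ when $n$ is infinite — here one uses that $T$ is $Z(E(M))$-linear and, by Corollary \ref{C} together with general facts about $E(M)$, continuous for $t_c(M)$, so it commutes with the $t_c(M)$-convergent net of partial sums of the $e_{ii}$. The finite-$n$ case is purely algebraic and requires none of this, so a clean write-up first disposes of finite $n$ and then treats the $\mathrm{I}_\infty$ homogeneous case with the continuity argument.
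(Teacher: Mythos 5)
Your overall strategy (reduce to homogeneous summands, then conjugate one system of matrix units onto another) is reasonable in spirit, but there are two genuine gaps. First, the intertwining element is wrong as written: for two systems of matrix units $\{e_{ij}\}$ and $\{f_{ij}\}$ the element $a=\sum_i f_{i1}e_{1i}$ does not conjugate one system onto the other, because $e_{1i}e_{j1}=\delta_{ij}e_{11}$ leaves a factor $f_{11}e_{11}$ sitting in the middle of every product $ae_{jk}a^{-1}$; indeed for $M=M_2(\mathbb{C})$ and $T$ equal to conjugation by the flip matrix one gets $f_{11}=e_{22}$, $f_{21}=e_{12}$, and your $a=e_{22}e_{11}+e_{12}e_{12}=0$. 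One needs a connecting element $v$ (with a two-sided inverse $w$ satisfying $vw=f_{11}$, $wv=e_{11}$) and then $a=\sum_i f_{i1}v e_{1i}$; the existence of such a $v$ is an equivalence of the idempotents $e_{11}$ and $T(e_{11})$, which are \emph{not} projections in general since a $Z(E(M))$-linear automorphism need not be $*$-preserving (conjugation by a non-unitary invertible already destroys adjoints), so $a^{-1}=a^{*}$ fails and the comparison argument must be run on support projections. Relatedly, the estimate $\|f_{i1}\|\le\mathbf{1}$ is unjustified: the $f_{i1}=T(e_{i1})$ are images of bounded elements under an a priori unbounded automorphism, so the convergence of $\sum_i f_{ii}$ and of $a$ inside $E(M)$ is exactly what has to be proved, not assumed.

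Second, and more fundamentally, you invoke continuity of $T$ by citing Corollary \ref{C}, but that corollary only says the topologies $t(M)$ and $t_c(M)$ coincide; it says nothing about continuity of an automorphism, and Remark \ref{R1}(ii) shows that automorphisms of $E(M)$ can be $t(M)$-discontinuous. Establishing that a $Z(E(M))$-linear automorphism is $t(M)$-continuous is the real content of the paper's proof: it is done by a closed-graph argument on $\sigma$-finite central summands using abelian projections and Proposition \ref{D}, after which Zakirov's theorem yields a central element $c$ with $\|T(x)\|\le c\|x\|$, so that on suitable central pieces $T$ preserves $M$ and Kaplansky's innerness theorem for type I von Neumann algebras applies there. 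Without a substitute for this continuity step, your passage from ``$T=T_a$ on the matrix units'' to ``$T=T_a$ on all of $E(M)$'' in the homogeneous $\mathrm{I}_\infty$ case is unsupported (for finite $n$ the purely algebraic argument does suffice, once the intertwiner is repaired).
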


\begin{proof} Let us show that  $T$ is  $t(M)$-continuous. First
suppose that the center  $Z(M)$ of the von Neumann algebra $M$
is  $\sigma$-finite.
Then the topology  $t(M)$ is metrizable and hence it is sufficient
to prove that the operator $T$ is $t(M)$-closed.

Consider a sequence  $\{x_n\}\subset E(M)$
such that $x_n\stackrel{t(M)}\longrightarrow 0,$
$T(x_n)\stackrel{t(M)}\longrightarrow y.$
Take  $x\in E(M)$ such that  $T(x)=y$
and let us show that  $x=0.$ Since
$$
x^{\ast}x_n\stackrel{t(M)}\longrightarrow 0
$$
and
$$
T(x^{\ast}x_n)=T(x^{\ast})T(x_n)\stackrel{t(M)}\longrightarrow
T(x^{\ast})y=T(x^{\ast})T(x)=T(x^{\ast}x),
$$
we may suppose (by replacing the sequence  $\{x_n\}$ by the
sequence  $\{x^{\ast}x_n\}$) that $x\geq 0.$

Let  $p\in M$ be an arbitrary abelian projection with
$c(p)=\textbf{1}.$
Then  $px_n p=a_n p$ for an appropriate
$a_n\in S(Z(M)),$  $n\in \mathbb{N}.$ Since  $x_n\stackrel{t(M)}\longrightarrow 0$
and  $c(p)=\textbf{1}$
it follows that  $a_n\stackrel{t(M)}\longrightarrow 0.$
Therefore
$$
T(p)T(x_n)T(p)=T(px_np)=T(a_np)=a_nT(p)\stackrel{t(M)}\longrightarrow 0.
$$
On the other hand
$$
T(p)T(x_n)T(p)\stackrel{t(M)}\longrightarrow T(p)yT(p),
$$
thus  $T(p)yT(p)=0$ and hence
$$
pxp=T^{-1}(T(p)yT(p))=T(0)=0,
$$
i.e. $pxp=0$ for all abelian projections with $c(p)=\textbf{1}.$
Therefore Proposition  \ref{D}  implies that  $x=0,$
i.e.  $T$ is  $t(M)$-continuous.

Now consider the general case, i.e. when the center
 $Z(M)$ is arbitrary.
Take a family  $\{z_i\}_{i\in I}$ of mutually orthogonal central
projections in  $M$ with $\bigvee\limits_{i}z_i=\textbf{1}$ such
that  $z_iZ(M)$ is $\sigma$-finite for all $i\in I.$ From the
above we have that $z_i T$ is $t(z_i M)$ continuous on $z_i E(M)$
for all $i\in I,$ where $(z_i T)(x)=T(z_i x)=z_iT(x)$ is the
restriction of $T$ onto $z_i E(M)$ which is well-defined in view
of the $Z(E(M))$-linearity of $T.$ Therefore $T$ is
$t(M)$-continuous of whole $E(M)=\bigoplus\limits_{i\in
I}z_iE(M).$

Further by Corollary  \ref{C} the topologies $t(M)$ and $t_c(M)$
coincide and hence $T$ is also $t_c(M)$-continuous and according
to \cite[Theorem 2]{Zak} there exists $c\in S(Z(M))$ such that
$||T(x)||\leq c ||x||$ for all $x\in E(M).$

Take a sequence
  $\{z_n\}_{n\in \mathbb{N}}$ of mutually orthogonal central projections in
    $M$ with   $\bigvee\limits_{n}z_n=\textbf{1}$
such that   $z_n c\in Z(M)$ for all $n\in \mathbb{N}.$ This means
that the automorphism $z_nT$ maps bounded elements from $z_nE(M)$
to bounded elements, i.e. $z_nT(z_nM)\subseteq z_nM.$ Then given
any  $n\in \mathbb{N}$ the automorphism  $z_n T|_{z_n M}$ is
identical on the center of  $z_n M.$ By theorem of Kaplansky
\cite[Theorem 10]{Kap} there exist elements  $a_n \in z_n M$ which
are invertible in $z_n M,$ such that $z_nT(x)=a_n x a_n^{-1}$ for
all $x\in z_n M.$ Put $a=\sum\limits_{n\geq 1}z_n a_n.$ It is
clear that $a\in E(M)$ and
$$
T(x)=\sum\limits_{n\geq 1}z_n T(x)=\sum\limits_{n\geq 1}z_n T(z_nx)
=\sum\limits_{n\geq 1}a_n (z_nx)a_n=axa^{-1}
$$ for all
$x\in E(M).$ \end{proof}

Let  $M$ be a von Neumann algebra of type I$_n,$ $n\in
\mathbb{N},$ with the center  $Z(M).$ Then  $M$ is *-isomorphic to
the algebra  $M_n(Z(M))$ of all $n\times n$ matrices over  $Z(M)$
(cf. \cite[Theorem  2.3.3]{Sak}). Moreover   the algebra
$S(M)=E(M)$ is *-isomorphic to the algebra
 $M_n(Z(S(M))),$ where
$Z(S(M))=S(Z(M))$ is the center of  $S(M)$ (see \cite[Proposition
1.5]{Alb2}). If  $e_{i j},\,i,j=\overline{1, n}$ are matrix units
in $M_n(S(Z(M)))$ then each element  $x\in M_n(S(Z(M)))$ has the
form
 $$x=\sum\limits_{i,j=1}^{n}a_{i j}e_{i j},\,a_{i j}\in S(Z(M)),\,i,j=\overline{1, n}.$$
Let   $\phi:S(Z(M))\rightarrow S(Z(M))$ be an automorphism. Setting
\begin{equation}
\label{cenfin}
T_{\phi}\left(\sum\limits_{i,j=1}^{n}a_{i j}e_{i j}\right)=
 \sum\limits_{i,j=1}^{n}\phi(a_{i j})e_{i j}
\end{equation}
we obtain a linear operator  $T_\phi$ on  $M_n(S(Z(M))),$
which is in fact an automorphism of  $M_n(S(Z(M))).$ Indeed, for
$$
x=\sum\limits_{i,j=1}^{n}a_{i j}e_{i j},
y=\sum\limits_{i,j=1}^{n}b_{i j}e_{i j},\,a_{i j}, b_{i j}\in S(Z(M)),\,i,j=\overline{1, n}
$$
we have
$$
T_\phi(x y)=T_\phi\left(\sum\limits_{i,j=1}^{n}a_{i j}e_{i j}\sum\limits_{k, s=1}^{n}b_{k s}e_{k s}\right)=
T_\phi\left(\sum\limits_{i,j, s=1}^{n}a_{i j}b_{j s}e_{i s}\right)=$$
$$
=
\sum\limits_{i,j, s=1}^{n}\phi(a_{i j}b_{j s})e_{i s}=
\sum\limits_{i,j, s=1}^{n}\phi(a_{i j})\phi(b_{j s})e_{i s}
=
$$
$$
=
\sum\limits_{i,j=1}^{n}\phi(a_{i j})e_{i j}\sum\limits_{k, s=1}^{n}\phi(b_{k s})e_{k s}=T_\phi(x)T_\phi(y),
$$
i.e.
$T_\phi(xy)=T_\phi(x)T_\phi(y).$

 The following property immediately follows from the definition of
 $T_\phi:$

if $\varphi$ and $\phi$ are two automorphisms of $S(Z(M))$ then
 $T_{\phi}\circ
T_{\varphi}=T_{\phi\circ\varphi},$ in particular
  $T^{-1}_\phi=T_{\phi^{-1}}.$

\begin{remark} \label{R1}

 (i) If the automorphism $\phi$ on $S(Z(M))$ is non trivial (i.e. not identical)
then it is clear that $T_\phi$ can not be an inner automorphism on
$M_n(S(Z(M)).$

(ii) It is known \cite[Lemma 1]{KR74} that every (algebraic)
automorphism of $C^\ast$-algebra is automatically norm continuous.
But in our case this is not true in general. Suppose that the
abelian algebra $S(Z(M))$ is represented as $L^0(\Omega, \Sigma,
\mu),$ with a continuous Boolean algebra $\Sigma.$ Then A.G.
Kusraev \cite[Theorem 3.4]{Kus} has proved that $S(Z(M))$ admits a
non trivial band preserving automorphism which is, in particular
$t(M)$-dis\-continuous. Therefore $T_\phi$ gives an example of a
$t(M)$-dis\-continuous automorphism of $E(M).$ In particular,
$T_\phi$ is not inner.
\end{remark}

\begin{proposition}\label{E}If  $M$ is a von Neumann algebra of type
 I$_n,$ then each auto\-morphism  $T$ of  $E(M)$
can be uniquely represented in the form
\begin{equation}
\label{autofin}
T=T_{a}\circ T_{\phi},
\end{equation}
where  $T_{a}$ is an inner automorphism implemented by an element
$a\in E(M),$ and  $T_{\phi}$ is the automorphism of the form
 \eqref{cenfin} generated by an automorphism $\phi$ of the center $S(Z(M)).$
\end{proposition}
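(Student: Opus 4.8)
The plan is to peel off the central part of $T$ first. Since $T$ is an automorphism of $E(M)$ it carries the center $Z(E(M))=S(Z(M))$ bijectively onto itself, so its restriction $\phi:=T|_{S(Z(M))}$ is an automorphism of $S(Z(M))$. Using the identification $E(M)=S(M)\cong M_n(S(Z(M)))$ together with the fixed matrix units $e_{ij}$, I would form the operator $T_\phi$ as in \eqref{cenfin}; the computation already recorded in the excerpt shows it is multiplicative, and the composition rule $T_\phi\circ T_{\varphi}=T_{\phi\circ\varphi}$ (applied with $\varphi=\phi^{-1}$) shows $T_{\phi^{-1}}$ is a two-sided inverse, so $T_\phi$ is genuinely an automorphism of $E(M)$.

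Next I would consider $S:=T\circ T_\phi^{-1}=T\circ T_{\phi^{-1}}$, which is again an automorphism of $E(M)$. For any $a\in S(Z(M))$ one has $S(a)=T\bigl(T_{\phi^{-1}}(a)\bigr)=T\bigl(\phi^{-1}(a)\bigr)=\phi\bigl(\phi^{-1}(a)\bigr)=a$, so $S$ acts identically on $Z(E(M))$; equivalently $S$ is $Z(E(M))$-linear. Applying Theorem \ref{A1}, which covers all type I von Neumann algebras and in particular type I$_n$, I conclude that $S$ is inner, say $S=T_a$ with $a\in E(M)$ invertible in $E(M)$. Composing on the right with $T_\phi$ then gives $T=S\circ T_\phi=T_a\circ T_\phi$, which is the asserted decomposition \eqref{autofin}.

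For uniqueness, suppose $T_a\circ T_\phi=T_b\circ T_\psi$ with $a,b$ invertible in $E(M)$ and $\phi,\psi$ automorphisms of $S(Z(M))$. Since inner automorphisms act identically on the center (as noted in the Introduction), restricting both sides to $S(Z(M))$ forces $\phi=\psi$, hence $T_\phi=T_\psi$ and therefore $T_a=T_b$. Thus the inner factor and the factor of the form \eqref{cenfin} are uniquely determined by $T$ (the implementing element $a$ being determined only up to multiplication by a central unit, which is the expected ambiguity for inner automorphisms). The same restriction argument also shows that in any such decomposition one is forced to take $\phi=T|_{S(Z(M))}$, which is what makes the existence step above canonical.

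I do not expect a serious obstacle here: essentially all of the analytic content — continuity of a $Z(E(M))$-linear automorphism in the topology $t_c(M)$ via Corollary \ref{C}, the resulting bound $\|T(x)\|\le c\|x\|$, and the reduction to Kaplansky's theorem on matrix algebras over a commutative ring — is already packaged inside Theorem \ref{A1}. The only small points that need to be spelled out are that $T_\phi$ is a well-defined automorphism of $E(M)$ (immediate from the matrix picture and the composition law for the $T_\phi$) and that the decomposition of a generic $T$ is obtained simply by subtracting off $T_\phi$ for $\phi=T|_{S(Z(M))}$ and invoking Theorem \ref{A1} on what remains.
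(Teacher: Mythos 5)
Your proposal is correct and follows essentially the same route as the paper: restrict $T$ to the center to get $\phi$, form $S=T\circ T_{\phi}^{-1}$, observe that $S$ is $Z(E(M))$-linear, invoke Theorem \ref{A1} to conclude $S=T_a$, and prove uniqueness by restricting a second decomposition to the center. Your added remark that the implementing element $a$ is determined only up to a central unit is a correct clarification not made explicit in the paper.
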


\begin{proof}
Let  $\phi$ be the restriction of  $T$ onto the center  $Z(E(M))=S(Z(M)).$
As it was mentioned earlier $\phi$ map  $Z(E(M))$ onto itself, i.e.
$\varphi$ is an automorphism of $Z(E(M)).$
Consider the automorphism  $T_\phi$ defined by
  \eqref{cenfin} and put
$S=T\circ T_{\phi}^{-1}.$
Since  $T$ and  $T_\phi$ coincide on $Z(E(M)),$ one has that
 $S$ is identical on the center  $Z(E(M)),$
 i.e. $S$ is a $Z(E(M))$-linear automorphism of $E(M).$
 By Theorem  \ref{A1}  there exists an invertible element
  $a\in E(M)$ such that $S=T_a,$ i.e. $S(x)=axa^{-1}$ for all $x\in E(M).$ Therefore
$T=S\circ T_\phi=T_a\circ T_\phi.$

Suppose that  $T=T_a\circ T_\phi=T_b\circ T_\varphi$ for $a, b \in E(M)$ and
automorphisms $\phi$ and $\varphi$ of $Z(E(M)).$
Then  $T_b^{-1}\circ T_a=T_\varphi\circ T_\phi^{-1},$ i.e.
$T_{b^{-1}a}=T_{\varphi\circ \phi^{-1}}.$
Since  $T_{b^{-1}a}$ is identical on the center  $Z(E(M))$ of $E(M),$
it follows that $\varphi\circ\phi$ is identical on the center $Z(E(M)),$ i.e.
 $\varphi=\phi.$ Therefore $T_\varphi=T_\phi,$ i.e. $T^{-1}_b\circ T_a=Id$ and hence
 $T_a=T_b.$
\end{proof}

\begin{proposition}\label{F}
Let  $M$ be a von Neumann algebra and let $T:E(M)\rightarrow E(M)$
be an automorphism. If $x\in E(M)$ and its central cover
$c(x)=\textbf{1}$ then  $c(T(x))=\textbf{1}.$
\end{proposition}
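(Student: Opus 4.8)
The plan is to characterize the condition $c(x)=\mathbf{1}$ purely algebraically, in a way that is obviously preserved by any automorphism. Recall that for a central projection $z\in P(Z(M))$, the relation $zx=0$ says precisely that $z\le c(x)^{\perp}$. Hence $c(x)=\mathbf{1}$ is equivalent to the statement: there is no nonzero central projection $z$ with $zx=0$. More usefully for an automorphism argument, I would phrase it in terms of the center as a ring: $c(x)=\mathbf{1}$ iff the only element $f\in Z(E(M))=S(Z(M))$ with $fx=0$ is $f=0$. Indeed, if $fx=0$ for some $f\neq0$, then the support projection $z=s(f)\in P(Z(M))$ (which lies in $S(Z(M))$ and is nonzero) satisfies $zx=0$ as well, forcing $c(x)\le z^{\perp}<\mathbf{1}$; conversely if $z:=c(x)^{\perp}\neq0$ then $z$ is a nonzero central element annihilating $x$. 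So $c(x)=\mathbf{1}$ $\Longleftrightarrow$ $x$ is not a zero divisor against the center.

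First I would establish this equivalence carefully, using that $Z(E(M))=S(Z(M))$ by the results recalled in Section 2, and that in the commutative algebra $S(Z(M))\cong L^{0}(\Omega,\Sigma,\mu)$ every element $f$ has a well-defined support projection (the characteristic function of $\{f\neq0\}$), which is the smallest central projection $z$ with $zf=f$. Next, I would use that $T$ maps $Z(E(M))$ onto itself (this is proved in the text at the start of Section 3): so $T$ restricts to an automorphism $\phi$ of $S(Z(M))$. Then I would argue: suppose $c(T(x))\neq\mathbf{1}$. By the equivalence just proved (applied to $T(x)$), there is a nonzero $g\in Z(E(M))$ with $gT(x)=0$. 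Write $g=T(f)$ with $f\in Z(E(M))$, $f\neq0$ (since $T$ is injective and $g\neq0$). Then $0=gT(x)=T(f)T(x)=T(fx)$, and injectivity of $T$ gives $fx=0$ with $f\neq0$, so by the equivalence $c(x)\neq\mathbf{1}$, a contradiction. Hence $c(T(x))=\mathbf{1}$.

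The one step that needs genuine care — the "main obstacle," though it is more a point of rigor than of difficulty — is the claim that $c(x)=\mathbf{1}$ is equivalent to $x$ having no nonzero central annihilator in $S(Z(M))$, rather than merely no nonzero central projection annihilator. The direction "$fx=0$, $f\neq0$ $\Rightarrow$ $s(f)x=0$" uses that $s(f)f=f$ and $s(f)$ is a spectral (idempotent) element of the abelian algebra $S(Z(M))$, which is legitimate since $S(Z(M))$ is a commutative regular ring whose idempotents are exactly the central projections of $M$ in $S$-form. Everything else is a formal manipulation with the automorphism property $T(ab)=T(a)T(b)$ and injectivity. I would also remark that no type hypothesis on $M$ is needed here, consistent with the statement, and that the argument is symmetric: applying it to $T^{-1}$ shows $c(x)=\mathbf{1}\Longleftrightarrow c(T(x))=\mathbf{1}$.
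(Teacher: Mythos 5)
Your argument is correct and is essentially the paper's own proof read contrapositively: both pull the central projection $\mathbf{1}-c(T(x))$ back through $T$ and use multiplicativity plus injectivity of $T$ to conclude that this preimage, and hence $\mathbf{1}-c(T(x))$ itself, must vanish. The only difference is that you route through general central annihilators and support projections, which is not needed here because $T^{-1}\bigl(\mathbf{1}-c(T(x))\bigr)$ is automatically an idempotent of the commutative algebra $Z(E(M))$, hence already a central projection.
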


\begin{proof}
Let $c(x)=\textbf{1}$ and consider a central projection $z\in
P(Z(M))$ such that    $T(z)=\textbf{1}-c(T(x)).$ Then
$$
T(z x)=T(z)T(x)=(\textbf{1}-c(T(x))c(T(x))T(x)=0
$$
and hence $zx=0.$ Therefore  $zc(x)=0,$ i.e. $z=0.$
This means that  $0=T(0)=\textbf{1}-c(T(x))=\textbf{1},$
i.e. $c(T(x))=\textbf{1}.$
\end{proof}

If $\phi$ is a *-automorphism of $E(M)$ then it is an order
automorphism and hence maps $M$ onto $M.$ But for an arbitrary
automorphism (non adjoint preserving), this not true in general.
For some particular cases one can obtain a positive result.

\begin{proposition}\label{H}
Let  $M$ be an abelian von Neumann algebra and let
 $\phi:E(M)\rightarrow E(M)$ be a  $t(M)$-continuous automorphism.
 Then
$\phi(M)\subseteq M.$
\end{proposition}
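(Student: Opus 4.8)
The plan is to characterize the elements of $M$ inside $E(M)$ by a property that is manifestly preserved by any $t(M)$-continuous algebra homomorphism. First I would set up notation. Since the abelian algebra $M$ has no direct summand of type II, Section~2 gives $E(M)=LS(M)=S(M)$; identifying $M$ with $L^\infty(\Omega,\Sigma,\mu)$ we then have $E(M)=S(Z(M))=L^0(\Omega,\Sigma,\mu)$, with $M$ the subalgebra of (classes of) bounded functions, and by Corollary~\ref{C} the topology $t(M)$ equals $t_c(M)$, which on the commutative algebra $E(M)$ -- where the vector norm \eqref{norm} reduces to $\|x\|=|x|$ -- is the topology of convergence locally in measure, with the sets $W(A,\varepsilon,\delta)$, $\mu(A)<\infty$, as a base of neighborhoods of $0$.

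The key step is the following claim: for $y\in E(M)$ one has $y\in M$ if and only if there is a real number $\lambda>0$ with $(\lambda^{-1}y)^n\stackrel{t(M)}\longrightarrow 0$ as $n\to\infty$. For the ``only if'' part, if $\|y\|_M\le c$ I take $\lambda=c+1$, so that $\|(\lambda^{-1}y)^n\|_M\le(c/\lambda)^n\to 0$; since the set $\{z\in M:\|z\|_M\le\varepsilon\}$ is contained in $O(A,\varepsilon,\delta)$, this yields $(\lambda^{-1}y)^n\stackrel{t(M)}\longrightarrow 0$. For the ``if'' part, assume $(\lambda^{-1}y)^n\stackrel{t(M)}\longrightarrow 0$ and fix $A\in\Sigma$ with $\mu(A)<\infty$ and $\delta>0$. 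For all large $n$ one has $(\lambda^{-1}y)^n\in W(A,1,\delta)$, i.e.\ $\mu\big(A\cap\{|\lambda^{-1}y|^n>1\}\big)\le\delta$ by the definition of $W(A,1,\delta)$; since $|\lambda^{-1}y(\omega)|>1$ implies $|\lambda^{-1}y(\omega)|^n>1$, the set $A\cap\{|y|>\lambda\}$ is contained in $A\cap\{|\lambda^{-1}y|^n>1\}$, so $\mu(A\cap\{|y|>\lambda\})\le\delta$. Letting $\delta\to0$ gives $\mu(A\cap\{|y|>\lambda\})=0$ for every $A$ of finite measure, and the direct sum property of $\mu$ then forces $\mu(\{|y|>\lambda\})=0$, i.e.\ $y\in M$ with $\|y\|_M\le\lambda$.

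Granting the claim, the proposition is immediate: given $x\in M$, choose $\lambda$ as in the ``only if'' part; then, using that $\phi$ is linear and multiplicative, $(\lambda^{-1}\phi(x))^n=\phi\big((\lambda^{-1}x)^n\big)$, and the $t(M)$-continuity of $\phi$ gives $(\lambda^{-1}\phi(x))^n\stackrel{t(M)}\longrightarrow\phi(0)=0$, whence $\phi(x)\in M$ by the ``if'' part applied to $y=\phi(x)$. Thus $\phi(M)\subseteq M$. I expect no serious difficulty here; the only point needing a little care is the ``if'' direction of the claim for a general (non-$\sigma$-finite) measure, where one passes from finite-measure test sets to all of $\Omega$ via the direct sum property of $\mu$. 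It is worth noting that the argument uses neither injectivity nor surjectivity of $\phi$, nor positivity or $\ast$-preservation -- only that $\phi$ is a $t(M)$-continuous algebra homomorphism of $E(M)$.
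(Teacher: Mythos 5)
Your proof is correct, and it takes a genuinely different route from the paper's. The paper argues via approximation: it first shows that for a simple element $x=\sum_i\lambda_ie_i$ the images $\phi(e_i)$ are again (mutually orthogonal, nonzero) projections because idempotents in the abelian algebra $E(M)\cong L^0(\Omega,\Sigma,\mu)$ are projections and $\phi$ is injective, so $\|\phi(x)\|_M=\max_i|\lambda_i|=\|x\|_M$; it then takes a sequence of simple elements $x_n\stackrel{t(M)}\longrightarrow x$ with $|x_n|\le|x|$ and uses the closedness of the ball $\{y:|y|\le\|x\|_M\mathbf{1}\}$ under convergence locally in measure to conclude $|\phi(x)|\le\|x\|_M\mathbf{1}$. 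You instead give an intrinsic characterization of the bounded part of $E(M)$ -- $y\in M$ iff $(\lambda^{-1}y)^n\stackrel{t(M)}\longrightarrow 0$ for some $\lambda>0$ -- which is visibly preserved by any $t(M)$-continuous, linear, multiplicative map; your verification of both directions of this claim (including the passage from finite-measure test sets to all of $\Omega$ via the direct sum property) is sound. What your approach buys: it needs neither injectivity nor surjectivity of $\phi$, avoids the approximation step and the spectral structure of idempotents, and yields the extra conclusion $\|\phi(x)\|_M\le\|x\|_M$ by taking the infimum over admissible $\lambda$; the paper's argument, on the other hand, gives the exact norm equality on simple elements and is closer in spirit to the order-theoretic viewpoint used elsewhere in the paper. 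Both arguments rely essentially on the identification $E(M)=L^0(\Omega,\Sigma,\mu)$ in the abelian case, so neither is more general in that respect.
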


\begin{proof} Let  $x\in M$ be a simple element,
i.e.
\[
x=\sum\limits_{i=1}^{n}\lambda_i e_i,
\]
 where  $\lambda_i\in \mathbb{C}, e_i\in P(M), e_i e_j=0, i\neq j, i, j =\overline{1, n}.$
 Let us prove that
 $\phi(x)\in M$ and  $||\phi(x)||_M=||x||_M.$ Since
 $M$ is abelian and $\phi(e_i)^2=\phi(e_i),$ it follows that
 $\phi(e_i)$ is a projection for each  $i=\overline{1, n}.$
Therefore from the equality
 $$
\phi(x)=\sum\limits_{i=1}^{n}\lambda_i \phi(e_i)
$$
we obtain that   $\phi(x)\in M$ and moreover
$$
||\phi(x)||_M=\max\limits_{1\leq i\leq n}|\lambda_i|=||x||_M.
$$

Let now  $x\in M$  be an arbitrary element. Consider
a sequence  of simple elements $\{x_n\}$ in  $M$ which
$t(M)$-converges to $x$
and
$
|x_n|\leq |x|
$
for all $n\in \mathbb{N}.$ Then
$
\phi(x_n)\stackrel{t(M)}\longrightarrow \phi(x)
$
and
$
||\phi(x_n)||_M=||x_n||_M\leq ||x||_M
$
for all $n\in \mathbb{N}.$
Therefore
$
|\phi(x)|\leq ||x||_M\textbf{1},
$
i.e. $\phi(x)\in M.$
\end{proof}

We are now in a position to consider automorphisms of central
extensions for type I$_{\infty}$ von Neumann algebras.

\begin{proposition}\label{J}
Let   $M$ be a von Neumann algebra of type $I_{\infty},$ and let
 $T:E(M)\rightarrow
E(M)$ be an automorphism of the central extension $E(M)$ of $M.$
Then  $T$ is $t(Z(M))$-continuous on $E(Z(M))$ and maps  $Z(M)$
onto itself.
\end{proposition}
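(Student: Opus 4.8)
The plan is: reduce to a homogeneous algebra with $\sigma$-finite centre, record the two facts about $T$ that survive \emph{without} any continuity hypothesis, then attack the $t(Z(M))$-continuity of the restriction of $T$ to the centre (the real content), and finally harvest invariance of $Z(M)$ from Proposition \ref{H}.

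\emph{Reductions.} Writing $\mathbf 1=\bigvee_i z_i$ with $z_iZ(M)$ $\sigma$-finite, each $z_iT$ is an automorphism of $z_iE(M)=E(z_iM)$ and $z_iM$ is again of type $\mathrm I_\infty$, so one may assume $Z(M)$ is $\sigma$-finite, whence $t(M)$ and $t(Z(M))$ are metrizable. Since $M$ is of type $\mathrm I_\infty$ one picks a sequence $\{e_n\}_{n\ge1}$ of mutually orthogonal abelian projections with $c(e_n)=\mathbf 1$, together with partial isometries implementing $e_1\sim e_n$; cutting by $p=\bigvee_n e_n$ gives $pE(M)p=E(pMp)$ with $pMp\cong B(H)\bar\otimes Z(M)$ homogeneous of type $\mathrm I_\infty$ and $Z(pMp)\cong Z(M)$. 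Using that $c(T(p))=\mathbf 1$ by Proposition \ref{F}, that idempotents of $E(M)$ are similar to projections, and that inner automorphisms act trivially on $Z(E(M))$, one transports the problem to $pE(M)p$; so we may (relabelling) assume $M=B(H)\bar\otimes Z(M)$ carries a system of matrix units $\{e_{ij}\}$ with $\sum_i e_{ii}=\mathbf 1$, and write $\phi:=T|_{E(Z(M))}$ for the (unchanged) restriction to the centre.

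\emph{Automatic facts.} Since $T$ sends idempotents to idempotents and the idempotents of the abelian algebra $E(Z(M))=S(Z(M))$ are projections, $T$ restricts to a Boolean automorphism $\tau$ of $P(Z(M))$; as $P(Z(M))$ is a complete Boolean algebra, the infinite distributive law $a\wedge\bigvee_i b_i=\bigvee_i(a\wedge b_i)$ forces $\tau$ to preserve arbitrary suprema. Hence $T$ commutes with every central sum: if $\{z_i\}\subset P(Z(M))$ are orthogonal with $\bigvee_i z_i=\mathbf 1$ then so are the $\tau(z_i)$, and $T\bigl(\sum_i z_ix_i\bigr)=\sum_i\tau(z_i)T(x_i)$. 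This is essentially the only continuity $T$ is entitled to a priori, and the type $\mathrm I_\infty$ structure has to supply the rest.

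\emph{Continuity, the heart.} As $E(Z(M))=S(Z(M))$ is an $F$-space and $\phi$ is linear, by the closed graph theorem it suffices that $a_n\to0$ and $\phi(a_n)\to b$ in $t(Z(M))$ imply $b=0$. Here the hypothesis that $M$ be of type $\mathrm I_\infty$ is indispensable: the commutative algebra $S(Z(M))$ alone carries discontinuous automorphisms (Remark \ref{R1}(ii)), and for $M$ of type $\mathrm I_n$ such an automorphism extends entrywise to $E(M)=M_n(S(Z(M)))$; what is different in type $\mathrm I_\infty$ is that in $E(B(H)\bar\otimes Z(M))$ a ``row'' $\sum_n c_ne_{1n}$ belongs to the algebra only when $\sum_n|c_n|^2\in S(Z(M))$ -- an honest $\ell^2$-summability constraint invisible in the finite case. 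The strategy is to suppose $\phi$ discontinuous, produce -- after localizing, rescaling and passing to a subsequence -- central elements $d_n$ that are uniformly unbounded while the $\phi(d_n)=T(d_n)$ stay uniformly bounded, and then distribute the $d_n$ over the blocks of the matrix decomposition, using the central-sum identity above to carry the construction through $T$, so as to violate the $\ell^2$-constraint. I expect the decisive obstacle to lie exactly in this distribution step: the images $T(e_{ij})$ of the matrix units are a priori only idempotents of $E(M)$, neither self-adjoint nor evidently bounded, so one must first conjugate $T$ by an inner automorphism (harmless, as $\phi$ is unaffected) to straighten $\{T(e_{ij})\}$ into a genuine system of matrix units -- Proposition \ref{F} being what keeps the central covers equal to $\mathbf 1$ throughout -- and only then can the weights in the assembled series be chosen correctly and the contradiction extracted.

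\emph{Conclusion.} Once $\phi$ is $t(Z(M))$-continuous, Proposition \ref{H} applied to the abelian von Neumann algebra $Z(M)$ (so that the ``$E(M)$'' there is $E(Z(M))=S(Z(M))$ and the ``$\phi$'' there is our $\phi$) gives $T(Z(M))\subseteq Z(M)$. Applying this to $T^{-1}$ -- whose restriction to $E(Z(M))$ is again $t(Z(M))$-continuous, by the open mapping theorem for $F$-spaces -- yields the reverse inclusion, so $T(Z(M))=Z(M)$, completing the proof.
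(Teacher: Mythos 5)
Your proposal does not actually prove the statement: the step you yourself label ``the heart'' --- the $t(Z(M))$-continuity of $\phi=T|_{E(Z(M))}$ --- is only a strategy sketch, and you explicitly flag an unresolved obstacle in it (``I expect the decisive obstacle to lie exactly in this distribution step\dots''). Nothing in the proposal produces the promised contradiction: you never construct the elements $d_n$, never specify the weights in the assembled row, and never show how the $\ell^2$-summability constraint is violated after passing through $T$, whose images of the matrix units are, as you note, only unbounded idempotents. Straightening $\{T(e_{ij})\}$ into a system of matrix units by an inner conjugation is itself nontrivial here (the $T(e_{ij})$ need not lie in any corner where polar-decomposition arguments for idempotents of $E(M)$ are routine), so the argument stops exactly where the work begins. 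The preliminary reductions (to $\sigma$-finite centre, to $pMp\cong B(H)\bar\otimes Z(M)$) and the Boolean-algebra observations are fine but do not touch the difficulty.

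The idea you are missing is much simpler and avoids matrix units entirely. Take mutually orthogonal abelian projections $p_n$ with $c(p_n)=\mathbf 1$ and, for an arbitrary norm-bounded sequence $\{a_n\}\subset Z(M)$, form the single element $x=\sum_n a_np_n\in M$; then $xp_n=a_np_n$, so applying $T$ gives $T(x)q_n=\phi(a_n)q_n$ with $q_n=T(p_n)$. The center-valued norm yields $|\phi(a_n)|\,\|q_n\|\le\|T(x)\|\,\|q_n\|$, and since $c(q_n)=\mathbf 1$ (Proposition \ref{F}) the function $\|q_n\|$ is a.e.\ nonzero, whence $|\phi(a_n)|\le\|T(x)\|$ for \emph{all} $n$. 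Thus $\phi$ sends every norm-bounded central sequence to a sequence dominated by one element of $S(Z(M))$, hence $t(Z(M))$-bounded, and continuity follows; this is where type $\mathrm I_\infty$ enters (infinitely many independent slots $p_n$ to encode the whole sequence into one element of $M$), with no $\ell^2$ bookkeeping needed. Your concluding step (Proposition \ref{H} applied to $\phi$ and to $\phi^{-1}$ to get $T(Z(M))=Z(M)$) is correct and even slightly more careful than the paper's one-line appeal, but it rests on the continuity you have not established.
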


\begin{proof} Since    $M$ is of type  I$_{\infty},$ there exists a sequence
of mutually orthogonal abelian projections
  $\{p_n\}_{n=1}^{\infty}$ in  $M$ with central covers equal to  $\textbf{1}.$
For a bounded sequence   $\{a_n\}$ from $Z(M)$
put
$$x=
\sum\limits_{n=1}^{\infty}a_n p_n.$$
Then
\[
x p_n=p_n x=a_n p_n
\]
for all $n\in \mathbb{N}.$

Now let $T$ be an automorphism of $E(M)$ and denote by $\phi$
its restriction onto the center of   $E(M).$
If  $q_n=T(p_n),$   $n\in \mathbb{N},$ then we have
$$T(x p_n)=T(x)T(p_n)=T(x)q_n$$
and
$$
T(x p_n)=T(a _n p_n)=T(a_n)T(p_n)=\phi(a_n)q_n,$$
therefore
\[
T(x)q_n=\phi(a_n)q_n.
\]

For the center-valued norm  $\|\cdot\|$
on   $E(M)$ (see  (\ref{norm})) we have
 $$
 \|q_n||||T(x)\|\geq\|q_n T(x)\|=
\|\phi(a_n)q_n\|=
|\phi(a_n)|||q_n\|,$$
i.e.
 $$
 \|q_n||||T(x)\|\geq
|\phi(a_n)|||q_n\|.$$ Since  $c(q_n)=c(p_n)=\textbf{1}$
(Proposition  \ref{F}) the latter inequality implies that
\begin{equation}
\label{ineq}
||T(x)\|\geq
|\phi(a_n)|.
\end{equation}

Let us show    that $\phi$ is  $t(Z(M))$-continuous on $E(Z(M)).$
If we suppose the opposite, then there
  exists a bounded sequence   $\{a_n\}$ in   $Z(M)$ such that
$\{\phi(a_n)\}$ is not  $t(Z(M))$-bounded, which contradicts
 (\ref{ineq}). Thus   $\phi$ is  $t(Z(M))$-continuous and
 Proposition  \ref{H}  implies that   $T$ maps  $Z(M)$ onto itself.
  \end{proof}

\begin{remark} \label{R3}
The $t(Z(M))$-continuity of $T$ on the center $E(Z(M))$ easily
implies that the restriction of $T$ on $E(Z(M))$ and hence on
$Z(M)$ is a *-automorphism  (cf. \cite[Lemma 1]{KR74}).
\end{remark}

Now we are going to  show that similar to the case of type I$_n$
$(n\in \mathbb{N})$ von Neumann algebras, automorphisms of the
algebras  $E(M)$ for homogeneous type I$_\alpha$ von Neumann
algebras ($\alpha$ is an infinite cardinal numbers) also can be
represented in the form  (\ref{autofin}).

Suppose that  $\phi:Z(M)\rightarrow Z(M)$ is an automorphism.
According to  \cite[Theorem 1]{Kap52} $\phi$ can be extended to a
*-automorphism of  $M,$ which we denote by
  $T_{\phi}.$ Since each *-automorphism is an order isomorphism
  and each hermitian element of $E(M)$ is an order limit of hermitian
  elements from $M,$
  we can naturally extend  $T_{\phi}$ to a  *-automorphism of $E(M).$

\begin{theorem}\label{A2}
If  $M$ is a type
I$_\alpha$ von Neumann algebra, where
 $\alpha$ is an infinite cardinal number,
 then each automorphism  $T$ on  $E(M)$
can be uniquely represented as
\[
T=T_{a}\circ T_{\phi},
\]
where  $T_{a}$ is an inner automorphism implemented by an element
$a\in E(M)$ and  $T_{\phi}$ is an *-automorphism, generated by an automorphism
 $\phi$ of the center  $Z(M)$ as above.
\end{theorem}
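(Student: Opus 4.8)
The plan is to follow the line of argument of Proposition~\ref{E}, but with the matrix description of $T_\phi$ replaced by the Kaplansky extension and with Proposition~\ref{J} supplying the behaviour of $T$ on the centre. A homogeneous type~I$_\alpha$ von Neumann algebra with $\alpha$ an infinite cardinal is in particular of type~I$_\infty$, so Proposition~\ref{J} applies and shows that $T$ maps $Z(M)$ onto $Z(M)$; moreover, by Remark~\ref{R3}, the map $\phi:=T|_{Z(M)}$ is then a *-automorphism of the abelian von Neumann algebra $Z(M)$. Hence, exactly as in the paragraph preceding the theorem, Kaplansky's theorem \cite[Theorem~1]{Kap52} together with the order-continuous extension procedure yields a *-automorphism $T_\phi$ of $E(M)$ whose restriction to $Z(M)$ is $\phi$.

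The next step is to check that $T$ and $T_\phi$ agree on the whole centre $Z(E(M))=S(Z(M))$, not merely on $Z(M)$. Every element $g\in S(Z(M))$ can be written as $g=\sum_i z_i a_i$ with $\{z_i\}$ a family of mutually orthogonal central projections satisfying $\bigvee_i z_i=\textbf{1}$ and with $a_i\in Z(M)$ (partition $\Omega$ into pieces on which the corresponding function is bounded). Any automorphism $U$ of $E(M)$ carries the centre onto itself and, being multiplicative and bijective, respects such central sums: indeed $U(z_j)U(z_i)=U(z_jz_i)=0$ for $i\neq j$, and $\bigvee_i U(z_i)=\textbf{1}$ because the complement $q$ of this join satisfies $U^{-1}(q)\le z_i^{\perp}$ for every $i$, hence $U^{-1}(q)=0$; therefore $U(g)=\sum_i U(z_i)U(a_i)$. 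Applying this to $U=T$ and to $U=T_\phi$, and using that both coincide with $\phi$ on each $z_i$ and each $a_i$, we obtain $T(g)=\sum_i\phi(z_i)\phi(a_i)=T_\phi(g)$. Consequently $S:=T\circ T_\phi^{-1}$ is an automorphism of $E(M)$ that is identical on $Z(E(M))$, i.e.\ it is $Z(E(M))$-linear, and, since $M$ is of type~I, Theorem~\ref{A1} provides an invertible element $a\in E(M)$ with $S=T_a$. Thus $T=S\circ T_\phi=T_a\circ T_\phi$.

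Uniqueness is obtained as in Proposition~\ref{E}. If $T=T_a\circ T_\phi=T_b\circ T_\varphi$ with $a,b\in E(M)$ and automorphisms $\phi,\varphi$ of $Z(M)$, then restricting both sides to the centre $Z(E(M))$, on which the inner automorphisms $T_a$ and $T_b$ act trivially, gives $T_\phi|_{Z(E(M))}=T_\varphi|_{Z(E(M))}$ and hence $\phi=\varphi$ on $Z(M)$; since the Kaplansky extension is canonically determined by $\phi$ (it is $\mathrm{id}\otimes\phi$ in the identification $M\cong B(H_\alpha)\overline{\otimes}Z(M)$), this forces $T_\phi=T_\varphi$ on $E(M)$ and therefore $T_a=T_b$. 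Alternatively, one may argue via $T_{b^{-1}a}=T_\varphi\circ T_\phi^{-1}=T_{\varphi\circ\phi^{-1}}$, exactly as in the type~I$_n$ case. The genuinely substantial inputs are Proposition~\ref{J}, Kaplansky's extension theorem and, above all, the reduction to Theorem~\ref{A1}; the only step requiring a small argument of its own is the passage in the second paragraph from agreement of $T$ and $T_\phi$ on $Z(M)$ to agreement on the larger centre $S(Z(M))$, and this is also where the infiniteness of $\alpha$ is essential: for type~I$_n$ algebras the restriction of $T$ to the centre need not preserve $Z(M)$ at all, which is exactly why the different, matrix-theoretic construction of $T_\phi$ had to be used there.
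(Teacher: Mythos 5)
Your proof follows essentially the same route as the paper: Proposition~\ref{J} to see that $\phi$ preserves $Z(M)$, Kaplansky's extension to build $T_\phi$, and Theorem~\ref{A1} applied to $T\circ T_\phi^{-1}$, with uniqueness as in Proposition~\ref{E}. The only difference is that you explicitly verify that $T$ and $T_\phi$ agree on all of $S(Z(M))$ rather than just on $Z(M)$ --- a detail the paper subsumes under ``similar to Proposition~\ref{E}'' --- and your argument for it is correct.
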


\begin{proof} Let   $M$ be an automorphism of
$E(M)$ where $M$ is a type  I$_\alpha$ von Neumann algebra with the center
 $Z(M).$
If $\phi$ is the restriction of $T$ onto the center
 $S(Z(M))$
of $E(M),$ then by Proposition
 \ref{J}   $\phi$ maps  $Z(M)$ onto itself.
 By  \cite[Theorem 1]{Kap52} as above $\phi$ can be extended
 to a  *-automorphism of  $E(M).$
   Now similar to the Proposition
    \ref{E} there exists an element
    $a\in E(M)$ such that  $T=T_{a}\circ T_{\phi}$
and this representation is unique.
\end{proof}

\begin{proposition}\label{G}
Let   $M$ and  $N$ be von Neumann algebras of type
 I and suppose that $M$ is homogeneous of type I$_\alpha.$
  If there exists an isomorphism
 (not necessary  *-isomorphism)
$T$ from $E(M)$ onto $E(N)$ then $N$ is also of type I$_\alpha.$
\end{proposition}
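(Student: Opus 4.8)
The plan is to exhibit the homogeneity degree of a type I von Neumann algebra as a purely algebraic invariant of its central extension and to check that $T$ transports it. As a first step one reduces to the case that $M$ and $N$ are both homogeneous: since $N$ is of type I, write $N=\bigoplus_\gamma z_\gamma N$ with $\{z_\gamma\}\subset P(Z(N))$ mutually orthogonal, $\bigvee_\gamma z_\gamma=\mathbf 1$ and each $z_\gamma N$ homogeneous of type I$_\gamma$. Then $z_\gamma\in S(Z(N))=Z(E(N))$, so $e_\gamma:=T^{-1}(z_\gamma)$ is an idempotent of $Z(E(M))=S(Z(M))$, hence a central projection of $M$; the $e_\gamma$ are mutually orthogonal and $\bigvee_\gamma e_\gamma=\mathbf 1$ (if $e\in P(Z(M))$ is orthogonal to every $e_\gamma$, then $T(e)\in P(Z(N))$ is orthogonal to every $z_\gamma$, so $T(e)=0$ and $e=0$). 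Since $E(e_\gamma M)=e_\gamma E(M)$ and $T(e_\gamma E(M))=z_\gamma E(N)=E(z_\gamma N)$, the restriction of $T$ is an algebra isomorphism $E(e_\gamma M)\to E(z_\gamma N)$, while $e_\gamma M$ is again homogeneous of type I$_\alpha$. Thus it suffices to prove: if $E(P)\cong E(Q)$ as algebras with $P$ homogeneous of type I$_\alpha$ and $Q$ homogeneous of type I$_\gamma$, then $\alpha=\gamma$; applied to each nonzero summand $e_\gamma M\cong z_\gamma N$ this forces $\alpha$ to be the only degree occurring in $N$.

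Next I would attach to a type I von Neumann algebra $P$ the cardinal
$$\kappa(P)=\sup\bigl\{\,|I|:\ \{q_i\}_{i\in I}\subset E(P)\text{ mutually orthogonal nonzero idempotents with }c(q_i)=\mathbf 1\ \forall i\,\bigr\},$$
where $c$ denotes the central cover, defined for elements of $LS(P)$ by the formula of Remark \ref{R}. If $T:E(P)\to E(Q)$ is an algebra isomorphism and $\{q_i\}$ is such a family, then the $T(q_i)$ form again a family of mutually orthogonal nonzero idempotents, and $c(T(q_i))=\mathbf 1$: the proof of Proposition \ref{F} goes through word for word for an isomorphism between two distinct central extensions, since it uses only that $T$ is an algebra isomorphism carrying centers onto centers. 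Hence $\kappa(P)\le\kappa(Q)$, and by symmetry (using $T^{-1}$) one gets $\kappa(P)=\kappa(Q)$.

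It then remains to compute $\kappa(P)=\alpha$ for $P$ homogeneous of type I$_\alpha$. The inequality $\kappa(P)\ge\alpha$ is immediate, as $P$ contains a family of $\alpha$ mutually orthogonal abelian projections of central cover $\mathbf 1$. For $\kappa(P)\le\alpha$: when $\alpha=n$ is finite, $E(P)\cong M_n(S(Z(P)))$, and any $n+1$ mutually orthogonal nonzero idempotents with central cover $\mathbf 1$ would have a sum whose rank function is $\ge n+1$ pointwise (each summand has rank $\ge1$ a.e. because its central cover is $\mathbf 1$), which is impossible in $M_n(S(Z(P)))$. When $\alpha$ is infinite, one uses the structure of $E(P)=LS(P)$ for homogeneous type I algebras, realising it as an algebra of measurable fields $\omega\mapsto x(\omega)\in B(H)$, $\dim H=\alpha$, over the spectrum of $Z(P)$; as $c(q_i)=\mathbf 1$ forces $q_i(\omega)\ne0$ almost everywhere, one is reduced to the elementary fact that a family $\{r_i\}$ of mutually orthogonal nonzero idempotents of $B(H)$ has cardinality at most $\dim H$: choosing unit vectors $\xi_i\in\operatorname{ran}r_i$ one has $\xi_j\in\ker r_i$ for $j\ne i$ and $r_i\xi_i=\xi_i$, whence $\|\xi_i-\xi_j\|\ge\|r_i\|^{-1}$, so partitioning the index set according to the integer part of $\|r_i\|$ exhibits it as a countable union of uniformly separated subsets of the unit sphere of $H$, each of cardinality at most the density character $\dim H$. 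Combining this with the first reduction gives $\alpha=\kappa(P)=\kappa(Q)=\gamma$, so $N$ is homogeneous of type I$_\alpha$.

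The main obstacle is the infinite case of the last step: making the passage to fibres rigorous — in particular handling a center $Z(P)$ that is not $\sigma$-finite, which one first decomposes into $\sigma$-finite pieces (this changes neither $\alpha$ nor, because $c(q_i)=\mathbf 1$, the admissible families $\{q_i\}$) — requires the structure theory of $LS$ for homogeneous type I algebras, whereas everything else is routine bookkeeping with idempotents and central covers.
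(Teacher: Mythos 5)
Your overall strategy --- reduce to the homogeneous-versus-homogeneous case and then transport a purely algebraic cardinal invariant $\kappa$ of the central extension --- is sound in outline and genuinely different from the paper's proof, which instead pushes the concrete abelian projections $p_i$ of $M$ forward to idempotents $q_i=T(p_i)$, replaces them by their left supports $f_i=s_l(q_i)$ (shown to be abelian projections of central cover $\mathbf{1}$), and then counts: in the finite case with the center-valued trace, and in the infinite case by showing that every abelian projection $e_j$ of $N$ satisfies $\tau(f_ie_j)\neq 0$ for some $i$, while for fixed $i$ normality of a faithful semifinite trace $\tau$ forces $\{j:\tau(f_ie_j)\neq 0\}$ to be countable, whence $\beta\le\alpha\aleph_0=\alpha$. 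Your reduction to homogeneous summands, the invariance of $\kappa$ under an isomorphism $E(M)\to E(N)$ (the proof of Proposition~\ref{F} does transfer verbatim), the lower bound $\kappa(P)\ge\alpha$, and the finite case of the upper bound are all correct.

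The infinite case of $\kappa(P)\le\alpha$ is where the proof breaks, and the break is not merely a matter of ``making the passage to fibres rigorous'': the fibrewise statement you want is simply unavailable. To prove $\kappa(P)\le\alpha$ with $\alpha\ge\aleph_0$ you must rule out a family $\{q_i\}_{i\in I}$ with $|I|=\alpha^{+}$, which is uncountable. Each relation $q_iq_j=q_jq_i=0$ and each condition $q_i(\omega)\neq 0$ holds only off a null set depending on the pair $(i,j)$ (respectively on $i$), and the union of these uncountably many null sets need not be null; hence there is in general no single fibre $\omega$ at which $\{q_i(\omega)\}_{i\in I}$ becomes a family of mutually orthogonal nonzero idempotents of $B(H)$, so your separation lemma in $B(H)$ never gets to act on the whole index set. (The partition ``by the integer part of $\|r_i\|$'' suffers from the same problem, since $\|q_i\|$ is only an $L^0$-valued function.) This is exactly the difficulty the paper's trace argument is built to circumvent: there the counting is done against the orthogonal family of genuine projections $e_j$ inside the von Neumann algebra $N$, where normality of $\tau$ handles uncountable orthogonal families, rather than against the unbounded idempotents themselves. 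A repair along your own lines seems possible but must be global rather than fibrewise --- for instance, after reducing to a finite measure base and truncating each $q_i$ to a subset of measure greater than $\tfrac{1}{2}\mu(\Omega)$ on which it is uniformly bounded, one gets a bounded biorthogonal-type system in $L^{2}(\mu)\otimes H$, and one must still control $\dim L^{2}(\mu)$ --- so this step needs to be reworked, not merely elaborated.
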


\begin{proof} Let $z_N$ be  a central projection in $N$ such
that $z_N N$ is of type I$_\beta,$ where $\beta$ is a cardinal
number. Take a central projection $z_M$ in $M$ such that
$T(z_M)=z_N.$ Replacing  $M$ and $N$ by $z_M M$ and $z_N N$
respectively we may assume that $z_M=\textbf{1}_M,$
$z_N=\textbf{1}_N.$

 Let  $\{p_i\}_{i\in I}$ (respectively  $\{e_j\}_{j\in j}$ )
be a family of mutually equivalent and orthogonal abelian
projections in $M$ (respectively in  $N$) with
$\bigvee\limits_{i\in I}p_i=\textbf{1}_M,$ (respectively
$\bigvee\limits_{j\in J}e_j=\textbf{1}_N,$) where
 $|I|=\alpha, |J|=\beta.$ It is clear that $c(p_i)=\textbf{1}_M$ for
 all $i\in I.$

 Then $q_i=T(p_i)$ is an idempotent ($q_i^2=q_i$) but
 not a projection in general. Let   $f_i=s_l(q_i)$ be  the left projection of the idempotent
  $q_i.$ Since $f_i$ is the projection onto  the range of the idempotent $q_i$  we have that
   $q_i f_i=f_i,$
  i.e. $f_i q_i f_i=f_i,$  and moreover  $c(f_i)=\textbf{1}_N,$
  because $c(q_i)=\textbf{1}_N$ (see Proposition \ref{F}). The equalities
  \[
q_iE(N)q_i=T(p_i E(M)p_i)=T(Z(E(M))p_i)=E(Z(N))q_i,
\]
 imply that for each $x\in E(N)$  there exists $a_x\in E(Z(N))$ such that
 $q_i xq_i=a_x q_i.$

 Now we show that $f_i$ is an abelian projection. For $x\in E(N)$ and each $f_i$ there exist $a_i\in E(Z(N))$
 such that
 $$
q_i f_i x f_i q_i=a_i q_i.
 $$
Thus
 $$
f_i x f_i=(f_i  q_i f_i) x (f_i q_i f_i)= f_i  (q_i f_i x f_i q_i)
f_i=f_i a_i q_i f_i=a_i f_i q_i f_i=a_i f_i,$$ i.e.
 $f_iE(N)f_i=E(Z(N))f_i.$ This means that
$f_i$ is an abelian projection.

Case  1. $\alpha$ and $\beta$ are finite. Let  $\Phi$ be a normed center-valued
trace on  $N.$
Then
\[
\textbf{1}_N=\Phi(\textbf{1}_N)=\sum\limits_{i\in I}\Phi(q_i)=\alpha \Phi(q_1)=\alpha \Phi(f_1q_1)=
\alpha \Phi(f_1q_1f_1)=\alpha \Phi(f_1).
\]
Since $N$ is of type $I_\beta,$
we have that
\[
\textbf{1}_N=\beta \Phi(f_1).
\]
Therefore
$\alpha=\beta.$

Case  2. $\alpha$ and $\beta$ are infinite. For a faithful normal
semi-finite trace  $\tau$ on  $N$ put
\[
\tau_i(x)=\tau(f_i x), x \in N.
\]
For each  $i\in I$ set
\[
J_i=\{j\in J: \tau_i(e_j)\neq 0.\}
\]
Since $\{e_j\}$ is an orthogonal family, one has that
 $J_i$  is countable for each  $i\in I.$

Suppose that there exists  $j\in J$ such that
 $\tau_i(e_j)=0$ for all $i\in I.$
 Since $\tau(f_i e_jf_i)=\tau(f_i e_j)=\tau_i(e_j)=0,$ we obtain that
  $f_ie_j f_i=0.$ But from
  $$
 0= f_ie_j f_i=f_i e_j e_j f_i= f_i e_j (f_i e_j)^{\ast}
  $$it follows that
    $f_i e_j=0$ for all  $i\in I.$ And since  $\bigvee\limits_{i\in I}f_i=\textbf{1}_N,$
this implies that $e_j=0$ -- a contradiction. Therefore given any
 $j\in J$ there exists   $i\in I$ such that
 $\tau_i(e_j)\neq 0,$ i.e. $j\in J_i.$ Hence
\[
J=\bigcup\limits_{i\in I}J_i,
\]
i.e.
\[
\beta\leq \alpha\aleph_0,
\]
therefore    $\beta\leq\alpha.$ Similarly $\alpha=\beta.$

This means that every homogeneous direct summand of the von
Neumann algebra $N$ is of type I$_\alpha,$ i.e. $N$ itself is
homogeneous of type I$_\alpha.$
\end{proof}

It is well-known \cite{Sak} that if   $M$ is  an arbitrary von
Neumann algebra of type  I with the center  $Z(M)$ then there
exists   an orthogonal family of central projections
$\{z_\alpha\}_{\alpha\in J}$ in  $M$ with $\sup\limits_{\alpha\in
J}z_{\alpha}=\textbf{1}$ such that $M$ is $\ast$-isomorphic to the
$C^{*}$-product of von Neumann algebras $z_\alpha M$ of type
I$_{\alpha}, \alpha\in J,$  i.e.
$$M\cong\bigoplus\limits_{\alpha\in J}z_\alpha M.$$
In this case by definition  of the central extension we have that
$$E(M)=\prod\limits_{\alpha\in J}E(z_\alpha M).$$

Suppose that  $T$ is an automorphism of
 $E(M)$ and  $\phi$ is its restriction onto the center
  $E(Z(M)).$ Let us show that  $T$ maps each
$z_\alpha E(M)\cong E(z_\alpha M)$ onto itself. The automorphism
$T$ maps  $z_\alpha E(M)$ onto  $T(z_\alpha)E(M).$ From
Proposition \ref{G} it follows that the von Neumann algebra
$T(z_\alpha) M$ is of type I$_\alpha.$ Thus $T(z_\alpha)\leq
z_\alpha.$ Suppose that $z_\alpha'=z_\alpha-T(z_\alpha)\neq 0.$ By
Proposition
 \ref{G} we have that $T^{-1}(z_\alpha') M$  is of type  I$_\alpha,$
i.e.
$$
0\neq z_\alpha''=T^{-1}(z_\alpha')\leq z_\alpha.
$$
On other hand
$$
T(z_\alpha z_\alpha'')=T(z_\alpha)T(z_\alpha'')=
T(z_\alpha)z_\alpha'=T(z_\alpha)(z_\alpha-T(z_\alpha))=
T(z_\alpha)-T(z_\alpha)=0,
$$
i.e.
 $z_\alpha z_\alpha''=0.$ Therefore since $z_\alpha''\leq z_\alpha$ we have that
   $z_\alpha''=0,$ -- a contradictions with the inequality
 $z_\alpha''\neq 0.$ Hence
$z_\alpha'=0,$ i.e. $T(z_\alpha)=z_\alpha.$

Therefore $\phi$ generates an automorphism
 $\phi_\alpha$ on each
$z_\alpha S(Z(M))\cong Z(E(z_\alpha M)),$ for   $\alpha\in J.$
Let  $T_{\phi_{\alpha}}$ be the automorphism of $z_\alpha E(M)$
generated by  $\phi_\alpha, \alpha\in J.$ Put
\begin{equation}
\label{auto6}
T_\phi\left(\{x_\alpha\}_{\alpha\in J}\right)=\{T_{\phi_\alpha}(x_\alpha)\},\,\{x_\alpha\}_{\alpha\in J}\in E(M).
\end{equation}
Then  $T_{\phi}$ is an automorphism of  $E(M).$

Now we can state the main result of the present paper.

\begin{theorem}\label{A3}
If  $M$ is a type I von Neumann algebra, then
each automorphism $T$ of  $E(M)$
can be uniquely represented in the form
\[
T=T_{a}\circ T_{\phi},
\]
where  $T_{a}$ is an inner automorphisms implemented by an element
$a\in E(M)$ and  $T_{\phi}$ is an automorphism of the form  (\ref{auto6}).
\end{theorem}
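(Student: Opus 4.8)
\textbf{Proof plan for Theorem \ref{A3}.}
The plan is to reduce the general type I case to the homogeneous case already settled in Theorem \ref{A2} by exploiting the central decomposition $M\cong\bigoplus_{\alpha\in J}z_\alpha M$ into homogeneous type I$_\alpha$ pieces, and the corresponding product decomposition $E(M)=\prod_{\alpha\in J}E(z_\alpha M)$. The crucial preliminary fact is the observation established in the paragraph preceding the theorem: using Proposition \ref{G} twice (once for $T$ and once for $T^{-1}$), one shows $T(z_\alpha)=z_\alpha$ for every $\alpha\in J$, so $T$ restricts to an automorphism $T_\alpha$ of each $z_\alpha E(M)\cong E(z_\alpha M)$. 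This is the step I expect to be the genuine content of the argument, but it is already in place in the excerpt, so here I only need to assemble the pieces.

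First I would apply Theorem \ref{A2} to each homogeneous summand: for every $\alpha\in J$ there is an element $a_\alpha\in E(z_\alpha M)$, invertible in $E(z_\alpha M)$, and an automorphism $\phi_\alpha$ of the center $z_\alpha S(Z(M))$ such that $T_\alpha=T_{a_\alpha}\circ T_{\phi_\alpha}$, and this factorization is unique. Next I would check that the family $\{\phi_\alpha\}_{\alpha\in J}$ is precisely the family of "components" of the single automorphism $\phi=T|_{S(Z(M))}$ under the decomposition $S(Z(M))\cong\prod_\alpha z_\alpha S(Z(M))$, so that $T_\phi$ as defined in \eqref{auto6} makes sense and has $z_\alpha T_\phi=T_{\phi_\alpha}$ on each summand. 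Then I would set $a=\{a_\alpha\}_{\alpha\in J}$, noting that $a\in E(M)$ since each $a_\alpha\in z_\alpha M\cdot$ (more precisely each $a_\alpha\in E(z_\alpha M)$) and the product of the $E(z_\alpha M)$ is exactly $E(M)$; the inverse $a^{-1}=\{a_\alpha^{-1}\}$ lies in $E(M)$ for the same reason, so $a$ is invertible in $E(M)$ and $T_a$ is a genuine inner automorphism. Since everything is coordinatewise,
\[
T(\{x_\alpha\})=\{T_\alpha(x_\alpha)\}=\{a_\alpha T_{\phi_\alpha}(x_\alpha)a_\alpha^{-1}\}=a\,T_\phi(\{x_\alpha\})\,a^{-1}=(T_a\circ T_\phi)(\{x_\alpha\}),
\]
which gives the desired decomposition $T=T_a\circ T_\phi$.

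For uniqueness I would argue exactly as in Proposition \ref{E}: if $T=T_a\circ T_\phi=T_b\circ T_\psi$ with $a,b$ invertible in $E(M)$ and $\phi,\psi$ automorphisms of $Z(E(M))$, then $T_{b^{-1}a}=T_b^{-1}\circ T_a=T_\psi\circ T_\phi^{-1}=T_{\psi\circ\phi^{-1}}$ (using the composition rule $T_\phi\circ T_\varphi=T_{\phi\circ\varphi}$, which holds componentwise for the automorphisms of the form \eqref{auto6} just as in the type I$_n$ case). Since $T_{b^{-1}a}$ is the identity on $Z(E(M))$ while $T_{\psi\circ\phi^{-1}}$ acts on $Z(E(M))$ as $\psi\circ\phi^{-1}$, we get $\psi=\phi$, hence $T_\psi=T_\phi$ and $T_{b^{-1}a}=\mathrm{Id}$, i.e. $b^{-1}a$ is central, so $T_a=T_b$.

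The main obstacle in a fully detailed write-up would be the bookkeeping in identifying the restriction $\phi=T|_{Z(E(M))}$ with the product of the $\phi_\alpha$'s and verifying that $a=\{a_\alpha\}$ genuinely lies in $E(M)$ with invertible inverse there — i.e. that the potentially unbounded "norms" $\|a_\alpha\|,\|a_\alpha^{-1}\|\in S(z_\alpha Z(M))$ paste together to elements of $S(Z(M))$ rather than escaping $E(M)$. This is automatic from the very definition $E(M)=\prod_\alpha E(z_\alpha M)$ and the fact that each $a_\alpha$ is already invertible within $E(z_\alpha M)$, so no extra estimate is actually needed; the real work of the theorem has already been done in Proposition \ref{G} and Theorem \ref{A2}.
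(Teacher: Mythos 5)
Your proposal is correct and follows essentially the same route as the paper: the paper likewise relies on the preliminary paragraph showing $T(z_\alpha)=z_\alpha$ and on the construction \eqref{auto6}, the only organizational difference being that it forms $S=T\circ T_\phi^{-1}$ and applies Theorem \ref{A1} once to the whole type I algebra (``similar to the proof of Proposition \ref{E}''), rather than invoking the homogeneous-case results summand-by-summand and pasting the implementing elements $a_\alpha$ together. One small correction: the homogeneous summands $z_\alpha M$ with $\alpha=n$ finite are handled by Proposition \ref{E}, not by Theorem \ref{A2}, which is stated only for infinite cardinals, so your componentwise step should cite both.
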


\begin{proof} Let  $T$ be an automorphism of  $E(M)$ and
  $\phi$
be its restriction on   $Z(E(M))$ -- the center of  $E(M).$
Consider the automorphism $T_{\phi}$ on  $E(M)$ generated by the automorphism
 $\phi$ as in  \eqref{auto6} above. Similar to the proof of Proposition \ref{E}  we find an element
$a\in E(M)$ such that  $T=T_{a}\circ T_{\phi}$ and show that this
representation is unique.
\end{proof}

Recall  \cite{Gut}, \cite{Kus} that an operator  $T:E(M)\rightarrow E(M)$
is called \emph{band preserving}  if  $T(zx)=zT(x)$ for all  $z\in P(Z(M)),$
$x\in E(M).$

Proposition \ref{J} and Theorem \ref{A3} imply the following
result which is an analogue of \cite[Theorem 5, Remark A]{KR74}
giving a sufficient condition for innerness of algebraic
automorphisms.

\begin{corollary}\label{Co}
If $M$ is a von Neumann algebra of type
 I$_\infty$ then each band preserving automorphism of
  $E(M)$ is inner.
\end{corollary}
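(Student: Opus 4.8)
The plan is to combine Theorem \ref{A3} with Proposition \ref{J} and the explicit structure of the automorphism $T_\phi$ described in \eqref{auto6}. By Theorem \ref{A3}, any automorphism $T$ of $E(M)$ factors uniquely as $T=T_a\circ T_\phi$, where $T_a$ is inner and $T_\phi$ is built (componentwise over the homogeneous summands $z_\alpha M$) from the automorphism $\phi=T|_{Z(E(M))}$ of the center $Z(E(M))=S(Z(M))$. Since $T_a$ is automatically band preserving (inner automorphisms are $Z(E(M))$-linear, hence commute with multiplication by central projections), and since a composition of two band preserving maps is band preserving while $T$ is assumed band preserving, it follows that $T_\phi=T_a^{-1}\circ T$ is band preserving as well. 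So the crux is to show that when $M$ is of type $\mathrm{I}_\infty$, a band preserving automorphism of the form $T_\phi$ must be the identity; then $T=T_a$ is inner.

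To carry this out, first I would invoke Proposition \ref{J}: since $M$ is of type $\mathrm{I}_\infty$ (so each homogeneous summand $z_\alpha M$ is of type $\mathrm{I}_{\alpha}$ with $\alpha$ infinite), the restriction $\phi$ of $T$ to the center is $t(Z(M))$-continuous on $E(Z(M))$, and by Remark \ref{R3} it is in fact a $*$-automorphism of $Z(M)$. Now a band preserving $*$-automorphism of the abelian algebra $Z(M)$ must be the identity: band preservation forces $\phi(z)=z$ for every central projection $z\in P(Z(M))$ (apply $\phi(z\cdot\mathbf 1)=z\phi(\mathbf 1)=z$), and since $Z(M)$ is a von Neumann algebra, its projections are norm-total, so $\phi=\mathrm{id}$ on $Z(M)$, and by $t(Z(M))$-continuity $\phi=\mathrm{id}$ on all of $E(Z(M))=S(Z(M))$. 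Consequently each induced automorphism $\phi_\alpha$ on $z_\alpha S(Z(M))$ is the identity, so $T_{\phi_\alpha}=\mathrm{id}$ on each $z_\alpha E(M)$, and therefore $T_\phi=\mathrm{id}$ on $E(M)=\prod_\alpha E(z_\alpha M)$ by \eqref{auto6}. Hence $T=T_a$ is inner.

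The main obstacle — really the only nontrivial point — is establishing that $\phi$ is genuinely trivial rather than merely "small": a priori a band preserving automorphism of $S(Z(M))$ need not be continuous (this is precisely the Kusraev phenomenon invoked in Remark \ref{R1}(ii)), so one cannot conclude $\phi=\mathrm{id}$ from band preservation alone on the large algebra $S(Z(M))$. The type $\mathrm{I}_\infty$ hypothesis is exactly what rescues the argument: it is what lets Proposition \ref{J} deliver the $t(Z(M))$-continuity (equivalently, the $*$-automorphism property) of $\phi$, at which point band preservation on the norm-dense set of central projections, together with continuity, pins $\phi$ down to the identity. I would make sure to note explicitly where the type $\mathrm{I}_\infty$ assumption enters — namely through Proposition \ref{J} — since for type $\mathrm{I}_n$ the analogous statement fails, as Remark \ref{R1} shows that $T_\phi$ is then a genuinely non-inner band preserving automorphism whenever $\phi$ is a nontrivial band preserving automorphism of the center.
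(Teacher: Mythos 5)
Your proposal is correct and takes essentially the same route as the paper's own proof: band preservation pins $\phi$ down on central projections (hence on simple central elements), Proposition \ref{J} supplies the $t(Z(M))$-continuity needed to conclude $\phi=\mathrm{id}$ on all of $S(Z(M))$, and Theorem \ref{A3} then gives $T=T_a$. The detour through showing $T_\phi=T_a^{-1}\circ T$ is band preserving is harmless but unnecessary, since $\phi$ is just the restriction of $T$ to the center.
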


\begin{proof} Let $\phi$ be the  restriction of  $T$ onto  $E(Z(M)).$
Since  $T$ is band preserving it follows that  $\phi$ acts
identically on the  simple elements from  $Z(M).$  Proposition
\ref{J} implies  that $\phi$ is  $t(Z(M))$-continuous. Hence
$\phi$ is identical on the whole $S(Z(M))=E(Z(M))$ and therefore
by Theorem \ref{A3} $T$ is an inner automorphism.
\end{proof}

\begin{remark} It is clear that the conditions of the
above Corollary is also necessary for the innerness of
automorphisms of $E(M).$
\end{remark}

\section*{Acknowledgments}

The second and the third named authors would like to acknowledge
the hospitality of the "Institut f\"{u}r Angewandte Mathematik",
Universit\"{a}t Bonn (Germany). This work is supported in part by
the DFG AL 214/36-1 project (Germany).

\end{document}